\newtheorem{theorem}{Theorem}[section]
\newtheorem{lemma}[theorem]{Lemma}
\newtheorem{corollary}[theorem]{Corollary}
\newtheorem*{theorem*}{Theorem}{\bf}{\it}
\newtheorem*{proposition*}{Proposition}{\bf}{\it}
\newtheorem*{observation*}{Observation}{\bf}{\it}
\newtheorem*{lemma*}{Lemma}{\bf}{\it}
\theoremstyle{definition}
\theoremstyle{remark}
\newtheorem{remark}[theorem]{Remark}
\renewcommand{\tilde}{\widetilde}
\def\XXint#1#2#3{{\setbox0=\hbox{$#1{#2#3}{\int}$ }
\vcenter{\hbox{$#2#3$ }}\kern-.6\wd0}}
\begin{document}
\title[]{Nodal sets of Laplace eigenfunctions: polynomial upper estimates of the Hausdorff measure.}

\author{Alexander Logunov}
\address{School of Mathematical Sciences, Tel Aviv University, Tel Aviv 69978, Israel}
\address{Chebyshev Laboratory, St. Petersburg State University, 14th Line V.O., 29B, Saint Petersburg 199178 Russia}
\email{log239@yandex.ru}


\begin{abstract}
 Let $\mathbb{M}$ be a compact $C^\infty$-smooth Riemannian manifold of dimension $n$, $n\geq 3$, and let $\varphi_\lambda: \Delta_M \varphi_\lambda + \lambda \varphi_\lambda = 0$ denote the Laplace eigenfunction on $\mathbb{M}$ corresponding to the eigenvalue $\lambda$. We show that  $$H^{n-1}(\{ \varphi_\lambda=0\}) \leq C \lambda^{\alpha},$$
where $\alpha>1/2$ is a constant, which depends on $n$ only, and $C>0$ depends on $\mathbb{M}$ .
This  result is a consequence of our study of  zero sets of harmonic functions on $C^\infty$-smooth  Riemannian manifolds. We develop a technique of propagation of smallness for solutions of elliptic PDE that allows us to obtain local bounds from above for the volume of the nodal sets  in terms of the frequency and the doubling index.
 \end{abstract}
\maketitle

\section{Preliminaries} \label{sec:di}

  Yau conjectured  that the Laplace eigenfunctions $\varphi_\lambda: \Delta \varphi_\lambda + \lambda \varphi_\lambda=0$ on a compact $C^\infty $-smooth Riemannain manifold $W$ of dimension $n$ (without  boundary) satisfy
$$c \lambda^{1/2} \leq H^{n-1}(\{ \varphi_\lambda=0\}) \leq C \lambda^{1/2}, $$
where $H^{n-1}(\cdot)$ denotes the $(n-1)$ dimensional Hausdorf measure, positive constants $c,C$ depend on the Riemannian metric and on the manifold only.
This conjecture was proved for  real-analytic manifolds by Donnelly and Fefferman (\cite{DF}). 
For non-analytic manifolds the best-known upper estimate in dimension $n=2$ was $H^{1}(\{ \varphi_\lambda=0\}) \leq C \lambda^{3/4}$ due to Donnelly and Fefferman (\cite{DF1}), different proof for the same bound was given by Dong (\cite{D}). Recently this bound was refined to  $C \lambda^{3/4-\varepsilon}$ in \cite{LM}.

 In higher dimensions the estimate $H^{n-1}(\{ \varphi_\lambda=0\}) \leq C\lambda^{C \sqrt \lambda }$  by Hardt and Simon (\cite{HS}) was the only known upper bound till now. We prove that

 $$H^{n-1}(\{ \varphi_\lambda=0\}) \leq C \lambda^{\alpha},$$
where $\alpha>1/2$ is a constant, which depends on $n$ only, $C$ depends on $M$.
  
 This estimate will follow from an estimate (Theorem \ref{thv}) for harmonic functions, which bounds the volume of the nodal set in terms of the frequency function (or the doubling index).

 There is a standard trick that allows to pass from Laplace eigenfunctions to harmonic functions:
one can add an extra variable  $t$ and consider a function $$u(x,t)=\varphi(x)\exp(\sqrt \lambda t),$$
which appears to be a harmonic function on the product manifold $W\times \mathbb{R}$.

  Let $M$ be a $C^\infty$-smooth  Riemannian manifold (non-compact and with no boundary), endowed with metric $g$. Consider a
 point $p \in M$ and a harmonic function $u$ (with respect to $g$) on $M$.  By $B_g(p,r)$ we will denote a  geodesic ball with center at point $p$ and radius $r$. Define $H(r) = \int\limits_{\partial B_g(p,r) } u^2 dS_r $, where $S_r$ is the surface measure  on $\partial B_g(p,r)$ with respect to $g$. 
 We will  always assume that  $r$ is smaller than the injectivity radius.

 \textbf{Definition.} The frequency function of a harmonic function $u$ is defined by
$$\beta(r):=\frac{r H'(r)}{2H(r)}. $$
   We remark that this definition is slightly different from the standard one, since we don't normalize $H(r)$ by the surface measure $|S_r|$. See \cite{HL} for a friendly introduction to frequency and also  \cite{GL}, \cite{L} for applications to nodal sets. In dimension two understanding of nodal sets of harmonic functions is better due to complex analysis techniques and topological reasons, see \cite{NPS}.

 We will work only on a bounded subset of $M$:
 fix a point $O$ on $M$ and  assume hereafter that $B_g(p,r) \subset B_g(O,1)$.

 The frequency is almost monotonic in the following sense (see Remark (3) to Theorem 2.2 in \cite{M}):
\begin{lemma}

 For any $\varepsilon>0$ there exists $R_0= R_0(\varepsilon,M,g,O)$ such that
\begin{equation} \label{b1}
\beta(r_1) \leq (1+\varepsilon) \beta(r_2)
\end{equation}
for any $r_1,r_2$: $0< r_1< r_2< R_0$.
\end{lemma}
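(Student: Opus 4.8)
The plan is to prove the Riemannian, ``almost'' version of Almgren's monotonicity of the frequency: the factor $(1+\varepsilon)$ will absorb both the curvature of $(M,g)$ and the fact that $r_1<r_2$ are confined to a short interval $(0,R_0)$. I would work in geodesic polar coordinates $(r,\theta)$ around $p$ (legitimate since $r$ is below the injectivity radius). By the Gauss lemma $g=dr^2+r^2h(r,\theta)$ and $|\nabla r|\equiv1$; the surface element is $dS_r=r^{n-1}\sqrt{\det h}\,d\theta$, the mean curvature of the geodesic sphere is $\Delta_g r=\tfrac{n-1}{r}+O(r)$, and the Hessian of the distance function is $\nabla^2 r=\tfrac1r(g-dr\otimes dr)+O(r)$. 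All the $O(\cdot)$'s are bounded uniformly for $p\in\closure{B_g(O,1)}$ in terms of the curvature of $g$ and its derivatives on that compact set; this (together with a uniform lower bound for the injectivity radius) is the only place where smoothness of $g$ and boundedness of the region enter, and it is what makes every constant below depend on $M,g,O$ only. Alongside $H$ I would use the Dirichlet energy $D(r)=\int_{B_g(p,r)}|\nabla u|^2\,dV$; harmonicity and Green's formula give $D(r)=\int_{\partial B_g(p,r)}u\,\partial_\nu u\,dS_r$, and, assuming $u\not\equiv0$ (a nonzero constant is trivial), unique continuation yields $H(r)>0$ and $D(r)>0$ throughout the admissible range.

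The first step is the first-variation formula for $H$: differentiating in polar coordinates and using $\Delta_g r$ and Green's identity,
$$H'(r)=\int_{\partial B_g(p,r)}(\Delta_g r)\,u^2\,dS_r+2\int_{\partial B_g(p,r)}u\,\partial_\nu u\,dS_r=\frac{n-1}{r}H(r)+2D(r)+E_1(r),\qquad|E_1(r)|\le C\,r\,H(r),$$
so that with $N(r):=rD(r)/H(r)\ge0$ one has $\beta(r)=\tfrac{n-1}{2}+N(r)+O(r)$; in particular $\beta(r)\ge\tfrac{n-1}{3}$ on $(0,R_0)$ for $R_0$ small. The second step is the Rellich--Pohozaev identity with the radial field $X=r\nabla r$: expanding $\dv\!\big((Xu)\nabla u-\tfrac12|\nabla u|^2X\big)$, using $\Delta_g u=0$, $\dv X=n+O(r)$ and $\langle\nabla_{\nabla u}X,\nabla u\rangle=|\nabla u|^2+O(r)|\nabla u|^2$ (both from the formula for $\nabla^2 r$), and integrating over $B_g(p,r)$ with $\nu=\nabla r$ (note $X=r\nu$ on the boundary, with no error there), one gets
$$D'(r)=\int_{\partial B_g(p,r)}|\nabla u|^2\,dS_r=\frac{n-2}{r}D(r)+2\int_{\partial B_g(p,r)}(\partial_\nu u)^2\,dS_r+E_2(r),\qquad|E_2(r)|\le C\,D(r).$$

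Combining, the $1/r$-terms cancel and
$$\frac{N'(r)}{N(r)}=\frac1r+\frac{D'(r)}{D(r)}-\frac{H'(r)}{H(r)}=2\!\left(\frac{\int_{\partial B_g(p,r)}(\partial_\nu u)^2\,dS_r}{\int_{\partial B_g(p,r)}u\,\partial_\nu u\,dS_r}-\frac{\int_{\partial B_g(p,r)}u\,\partial_\nu u\,dS_r}{\int_{\partial B_g(p,r)}u^2\,dS_r}\right)+O(1),$$
the $O(1)$ collecting $E_1/H$ and $E_2/D$. The bracket is $\ge0$ by Cauchy--Schwarz on $\partial B_g(p,r)$ (its numerator after clearing denominators is $H(r)\int(\partial_\nu u)^2-D(r)^2\ge0$), hence $\tfrac{d}{dr}\log N(r)\ge-C$ and $N(r_1)\le e^{C(r_2-r_1)}N(r_2)\le e^{CR_0}N(r_2)$ for $0<r_1<r_2<R_0$. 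Plugging this into $\beta=\tfrac{n-1}{2}+N+O(r)$ and using $0\le N(r_2)\le\beta(r_2)$ and $\beta(r_2)\ge\tfrac{n-1}{3}$,
$$\beta(r_1)-\beta(r_2)\le\big(e^{CR_0}-1\big)N(r_2)+CR_0\le\Big(e^{CR_0}-1+C'R_0\Big)\beta(r_2),$$
so $\beta(r_1)\le(1+\varepsilon)\beta(r_2)$ once $R_0=R_0(\varepsilon,M,g,O)$ is small enough that $e^{CR_0}-1+C'R_0\le\varepsilon$.

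The only delicate point is the uniform control, over all base points $p\in\closure{B_g(O,1)}$, of the curvature error terms in the two differential identities --- equivalently, of $\nabla^2 r$ and $\dv(r\nabla r)$ for the distance to $p$ --- which comes from comparison geometry on the compact set $\closure{B_g(O,1)}$; the rest is routine integration by parts. (Alternatively, one can pass to normal coordinates, where $u$ solves a uniformly elliptic divergence-form equation $\dv(A\nabla u)=0$ with $A(0)=I$ and $A$ Lipschitz with constants controlled on $\closure{B_g(O,1)}$, and invoke the Garofalo--Lin almost-monotonicity of the frequency \cite{GL}, checking only that the conformal weight $\mu(x)=\langle A(x)x,x\rangle/|x|^2=1+O(|x|^2)$ alters $H$, hence $\beta$, within the $O(r)$ errors already accounted for.)
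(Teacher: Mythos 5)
The paper does not prove this lemma at all: it is quoted verbatim from Mangoubi (Remark (3) to Theorem 2.2 in \cite{M}), with the underlying monotonicity theory credited to Garofalo--Lin \cite{GL}. So your proposal is not in competition with an in-text proof; it supplies one.

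Your self-contained argument is correct and is essentially the standard Almgren/Garofalo--Lin route, carried out on the manifold via geodesic polar coordinates: (i) first variation of $H$ using $\Delta_g r=\tfrac{n-1}{r}+O(r)$, (ii) a Rellich--Pohozaev identity with $X=r\nabla r$ using $\nabla^2 r=\tfrac1r(g-dr\otimes dr)+O(r)$, (iii) Cauchy--Schwarz to conclude $(\log N)'\ge -C$, hence $N(r_1)\le e^{CR_0}N(r_2)$, and (iv) transfer to $\beta$. The only delicate point, which you handle correctly, is the transfer step: because the paper's $H$ is \emph{not} normalized by $|S_r|$, one has $\beta=\tfrac{n-1}{2}+N+O(r)$, so $\beta$ is bounded below by $\tfrac{n-1}{3}$ on $(0,R_0)$; this is exactly what lets the additive curvature errors ($O(R_0)$) and the multiplicative loss ($e^{CR_0}-1$ on $N$) be absorbed into a pure multiplicative factor $(1+\varepsilon)$ on $\beta$. (If $H$ were normalized, $\beta$ could be near zero and an additive error would wreck a purely multiplicative inequality.) Your edge cases are also fine: for $u$ a nonzero constant $N\equiv 0$ and $\beta(r)=\tfrac{n-1}{2}+O(r^2)$, so \eqref{b1} is immediate; otherwise unique continuation gives $H,D>0$. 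In short, the proposal is a correct, self-contained proof of the cited result, by the same monotonicity mechanism that Mangoubi and Garofalo--Lin use, and it is a worthwhile addition since the paper otherwise leaves the reader to chase the reference.
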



 One can estimate the growth of $H(r)$ in terms of the frequency in view of the integral formula:
$$ \frac{H(r_2)}{H(r_1)} = \exp(2 \int_{r_1}^{r_2} \beta(r) d \log r).$$
\begin{corollary} \label{logc}
 $(\frac{r_2}{r_1})^{2\beta(r_1)/(1+\varepsilon)}\leq \frac{H(r_2)}{H(r_1)} \leq (\frac{r_2}{r_1})^{2\beta(r_2)(1+\varepsilon)}$.
\end{corollary}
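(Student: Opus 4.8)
The plan is to read off the statement directly from the two facts already on the table: the integral representation
$$\frac{H(r_2)}{H(r_1)} = \exp\Bigl(2\int_{r_1}^{r_2}\beta(r)\,d\log r\Bigr)$$
displayed just above the Corollary, and the almost monotonicity estimate \eqref{b1} from the Lemma. The whole proof consists in inserting a pointwise two-sided bound for $\beta(r)$ on the interval $[r_1,r_2]$ into the exponent and using $\int_{r_1}^{r_2}d\log r = \log(r_2/r_1)$.

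For the upper estimate, fix $\varepsilon>0$ and take $R_0=R_0(\varepsilon,M,g,O)$ from the Lemma; assume $0<r_1<r_2<R_0$. For every $r\in[r_1,r_2]$ the pair $r,r_2$ satisfies $0<r<r_2<R_0$, so \eqref{b1} gives $\beta(r)\le(1+\varepsilon)\beta(r_2)$. Hence $\int_{r_1}^{r_2}\beta(r)\,d\log r\le(1+\varepsilon)\beta(r_2)\log(r_2/r_1)$, and exponentiating with the factor $2$ yields $H(r_2)/H(r_1)\le(r_2/r_1)^{2\beta(r_2)(1+\varepsilon)}$. For the lower estimate, for every $r\in[r_1,r_2]$ the pair $r_1,r$ satisfies $0<r_1<r<R_0$, so \eqref{b1} gives $\beta(r_1)\le(1+\varepsilon)\beta(r)$, i.e. $\beta(r)\ge\beta(r_1)/(1+\varepsilon)$ (valid with no sign assumption on $\beta$ since $1+\varepsilon>0$). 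Thus $\int_{r_1}^{r_2}\beta(r)\,d\log r\ge\frac{\beta(r_1)}{1+\varepsilon}\log(r_2/r_1)$, and exponentiating gives $H(r_2)/H(r_1)\ge(r_2/r_1)^{2\beta(r_1)/(1+\varepsilon)}$.

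I do not expect a genuine obstacle here; the statement is a formal corollary. The only points worth a line of care are that the integral formula (equivalently the identity $\frac{d}{dr}\log H(r)=2\beta(r)/r$) requires $H>0$ on $[r_1,r_2]$, which holds for $u\not\equiv 0$, and that every intermediate radius stays in the range where \eqref{b1} applies, which is automatic because $r_1<r<r_2<R_0$. If one prefers to bypass the displayed integral formula, the same conclusion follows by integrating the ODE $\frac{d}{dr}\log H(r)=2\beta(r)/r$ between $r_1$ and $r_2$ with the same pointwise bounds on $\beta$.
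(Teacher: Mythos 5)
Your proof is correct and is exactly the argument the paper intends: plug the pointwise almost-monotonicity bounds $\beta(r)\le(1+\varepsilon)\beta(r_2)$ and $\beta(r)\ge\beta(r_1)/(1+\varepsilon)$ (both instances of \eqref{b1} applied to the pair with the smaller radius on the left) into the displayed integral formula and use $\int_{r_1}^{r_2}d\log r=\log(r_2/r_1)$. The paper leaves this as an immediate consequence, so there is nothing to compare further.
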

 Sometimes we will specify the center of the ball and our choice of the function $u$ and write $\beta(p,r) $ and $H(p,r)$ or $\beta_u(p,r) $ and $H_u(p,r)$ in place of $\beta(r)$ and $H(r)$.

 We  need a standard elliptic estimate that compares  $L^\infty$  and $L^2$ norms of harmonic functions on concentric geodesic spheres: for any $\varepsilon \in (0,1)$ there exists a constant $C_1 = C_1(\varepsilon, M, g,O)>0$ such that 
\begin{equation} \label{h1}
 \sup \limits_{\partial B_g(p,r)} |u|^2 \leq C_1 \frac{H(r(1+\varepsilon))}{r^{n-1}}
\end{equation} 
for $r\leq R_0$, where  $R_0=R_0(M,g,O)>0$.
 
  The reverse estimate  holds for arbitrary continuous functions on $M$:
\begin{equation}  \label{h2}
 H(r)\leq C_2(M,g,O) r^{n-1} \sup \limits_{\partial B_g(p,r)} |u|^2, 
\end{equation} 
 where $C_2(M,g,O)$ is  a positive constant such that the whole surface measure of a geodesic sphere $|S_r| \leq r^{n-1} C_2(M,g) $, $r\leq R_0$.

  Let us consider normal coordinates in a geodesic ball $B_g(O, R)$, where $R$ is a sufficiently small number. In these coordinates we will treat the Laplace operator as an elliptic operator in a fixed domain in $\mathbb{R}^n$, say, a  cube $Q$. We will identify $O$ with the origin and denote the ordinary Euclidean distance by $d(x,y)$ and the Riemannian distance by $d_g(x,y)$.
 Let  $\varepsilon>0$ be a small number. We will assume hereafter that 
 \begin{equation} \label{eqm}
\frac{d_g(x,y)}{d(x,y)} \in (1-\varepsilon, 1+ \varepsilon)
\end{equation}
 for  points $x,y$ in $B_{g}(O,R_0)$: $x \neq y$, where $R_0=R_0(\varepsilon, M,g,O)>0$. The existence of such $R_0$ for any $\varepsilon$ is provided by the choice of the normal coordinates.

 For the purposes of the paper it will be more convenient to work with a notion similar to the frequency: so-called doubling index, which deals with $L^\infty$ norms in place of $L^2$ and Euclidean balls in place of geodesic balls.
 For a given  ball  $B$ (ball in standard Euclidean metric) define the doubling index $N(B)$ by $2^{N(B)}= \frac{ \sup\limits_{2B} |u| }{\sup\limits_{B} |u|}$. Given a positive number $r$  we denote by $rB$ the homothety image of $B$  with coefficient $r$ such that $rB $ and $B$ have the same center.  If $B$ is an Euclidean ball in $\mathbb{R}^n$ with center at $x$ and radius $r$, then $N(x,r)$ will denote the doubling index for this ball. 

 We will use the estimates of growth of harmonic functions in terms of the doubling index.
\begin{lemma} \label{ln3}
 For any $\varepsilon \in (0,1)$ there exist $C=C(\varepsilon,M,g,O)>0$ and $R=R(\varepsilon,M,g,O)>0$ such that 
\begin{equation} \label{n3}
t^{N(x,\rho)(1-\varepsilon)- C} \leq \frac{\sup\limits_{B(x,t\rho)} |u|}{\sup\limits_{B(x,\rho)} |u|}  \leq  t^{N(x,t\rho)(1+\varepsilon) + C}
\end{equation}   
 for any $x\in M$ and numbers $\rho>0$, $t>2$ satisfying $B(x,t\rho) \subset B(O,R)$ (and for any harmonic function $u$). Furthermore,
there exists $N_0=N_0(\varepsilon,M,g)$ such that if additionally $N(x,\rho)>N_0$, then 
\begin{equation} \label{n3*}
 t^{N(x,\rho)(1-\varepsilon)}  \leq \frac{ \sup\limits_{B(x,t\rho)} |u|}{\sup\limits_{B(x,\rho)} |u|}.
\end{equation}
\end{lemma}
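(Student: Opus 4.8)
The plan is to deduce Lemma~\ref{ln3} from the frequency estimates already available, namely Lemma~1 (almost monotonicity of $\beta$), Corollary~\ref{logc} (the integral formula for $H$), together with the two-sided comparison \eqref{h1}--\eqref{h2} between $L^\infty$ on geodesic spheres and $H(r)$, and the metric comparison \eqref{eqm} between Euclidean and geodesic distances. The point is that \eqref{n3} and \eqref{n3*} are simply the ``$L^\infty$ / Euclidean ball'' translation of Corollary~\ref{logc}, with the doubling index $N$ playing the role of the frequency $\beta$, and all the errors ($\eps$ in the exponent, the additive $C$) absorbing the three conversions: $H \leftrightarrow \sup|u|$, geodesic $\leftrightarrow$ Euclidean, and $\beta \leftrightarrow N$.

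First I would fix $\eps$ and choose $R$ small enough that Lemma~1, Corollary~\ref{logc}, \eqref{h1}, \eqref{h2} and \eqref{eqm} all hold on $B(O,R)$ with a smaller auxiliary parameter $\eps'$ (to be chosen at the end in terms of $\eps$ and $n$). Next, for a fixed center $x$, I relate the Euclidean doubling index $N(x,\rho)$ to the frequency $\beta(x,\rho)$: using \eqref{h1}, \eqref{h2} and \eqref{eqm}, $\sup_{B(x,r)}|u|^2$ is squeezed between constant multiples of $H(x, r(1\pm\eps'))/r^{n-1}$, so that $2^{N(x,\rho)} = \sup_{B(x,2\rho)}|u|/\sup_{B(x,\rho)}|u|$ is, up to a bounded multiplicative factor, $\bigl(H(x,2\rho(1+\eps'))/H(x,\rho(1-\eps'))\bigr)^{1/2}$; by Corollary~\ref{logc} this pins $N(x,\rho)$ between $\beta(x,c\rho)(1-\eps'') - C$ and $\beta(x,c'\rho)(1+\eps'') + C$ for nearby radii $c\rho, c'\rho$. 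Then I would run the same chain in reverse: to bound $\sup_{B(x,t\rho)}|u|/\sup_{B(x,\rho)}|u|$ I pass to $H$, apply the integral formula $\frac{H(r_2)}{H(r_1)} = \exp(2\int_{r_1}^{r_2}\beta\,d\log r)$ (or directly Corollary~\ref{logc}), use almost monotonicity of $\beta$ to replace the integrand by $\beta$ at a single scale, convert that $\beta$ back to $N(x,\rho)$ (for the lower bound in \eqref{n3}) or to $N(x,t\rho)$ (for the upper bound) via the comparison just established, convert geodesic balls back to Euclidean ones, and collect the various bounded constants into the additive $C$ and the small exponent losses into $\eps$. Choosing $\eps'$ suitably small in terms of $\eps$ makes all the accumulated $(1\pm\eps')$ factors fit inside $(1\pm\eps)$.

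For the refined estimate \eqref{n3*} I would observe that the only reason for the additive constant $C$ in \eqref{n3} is the bounded multiplicative constants $C_1, C_2$ in \eqref{h1}--\eqref{h2} and the constant in the $\beta \leftrightarrow N$ comparison; when $N(x,\rho)$ (equivalently $\beta$ at that scale) is larger than a threshold $N_0$, these constants get swallowed: $t^{-C} \geq t^{-\eps N(x,\rho)/2}$ once $N(x,\rho) > 2C/\eps =: N_0$, so $t^{N(x,\rho)(1-\eps/2) - C} \geq t^{N(x,\rho)(1-\eps)}$, and after renaming $\eps$ this yields the clean one-sided bound. One must check $N_0$ can be taken independent of $O$; this follows because in normal coordinates the comparison constants depend on $M, g$ through a uniform modulus controlled by \eqref{eqm}, which is why the statement lists $N_0 = N_0(\eps, M, g)$ only.

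The main obstacle is bookkeeping rather than conceptual: one must carefully track that every radius appearing after a geodesic-to-Euclidean conversion and every application of almost monotonicity stays within $B(O,R)$ and within the range $0 < r_1 < r_2 < R_0$ where the cited lemmas apply, and that the hypothesis $t > 2$ (needed so that $N$, defined via doubling from $B$ to $2B$, actually controls growth over the larger ratio $t$) is used correctly when iterating the doubling inequality to reach scale $t\rho$. Making the choice of $\eps'$ explicit enough that the final exponents are genuinely $(1\pm\eps)$ and not merely $(1\pm O(\eps))$ requires a short but careful computation, which I would relegate to a routine verification.
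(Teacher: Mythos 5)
Your proposal is correct and follows essentially the same route as the paper: the paper deduces Lemma~\ref{ln3} from a $\beta\leftrightarrow N$ comparison (its equation~\eqref{b2}), the elliptic estimates \eqref{h1}--\eqref{h2}, the metric equivalence \eqref{eqm}, and Corollary~\ref{logc}, with the refined bound \eqref{n3*} obtained exactly as you describe by absorbing the additive $C$ into the exponent once $N(x,\rho)>2C/\varepsilon$. The only detail you wave at rather than pin down is the corner case $2<t\leq 2^{1+\varepsilon}$ (the paper handles it by using one application of the definition of $N(x,\rho)$ directly), but your plan otherwise mirrors the paper's Lemmas~\ref{l4}--\ref{l5}.
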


 The estimates \eqref{n3},\eqref{n3*} are corollaries from almost monotonicity of the frequency \eqref{b1} and standard elliptic estimates. For the convenience of the reader we deduce them in Lemma \ref{l4} and Lemma \ref{l5}. 

 We will show in Theorem \ref{thv} that there  exist $r=r(M)>0$ and $\alpha=\alpha(n)> 1$ such that the following inequality holds:
 $$H^{n-1}(\{u=0\}\cap B(O,r)) \leq C (N(O, Kr))^\alpha, $$
 where $K = K(n)\geq  2$  and $C=C(M)$.

Note that for real analytic manifolds one can replace $\alpha$ by $1$ in the estimate above, using complex analysis techniques (holomorhpic extension of a harmonic function to an open set in $\mathbb{C}^n$ and Jensen's formula  on one dimensional sections), see \cite{GL}. 

 We remark that only few properties of $H^{n-1}$  are used in the proof:
 subadditivity and the rescaling property. So there is a chance that the methods of this paper might be applied to other characteristics of nodal sets.

 We  outline the question we are trying to investigate in this paper:
Is the frequency additive in some sense? 

 Some partial positive answers are obtained in the simplex lemma and in the hyperplane lemma, which are combined to get the polynomial upper bounds for the volume of the nodal sets in terms of the frequency (or the doubling index). 

\subsection*{Acknowledgments} 
 This work  was started in collaboration with Eugenia Malinnikova who suggested to 
 apply the combinatorial approach to nodal sets of Laplace eigenfunctions. Her role in this work is no less  than the author's one.  Unfortunately, she
refused to be a coauthor of this paper.
On various stages of this work  
I discussed
it with Lev Buhovsky and Mikhail Sodin. Eugenia, Lev and Mikhail also  
read the first
draft of this paper and made many suggestions and comments.  I thank all of them.


    This work was started while the author was visiting NTNU, continued at the Chebyshev Laboratory (SPBSU) and finished at TAU. The final version of the paper was completed at the Institute for Advanced Study. I am grateful to these institutions for their hospitatility and for great working conditions.
  
  The author was  supported in part by ERC Advanced Grant~692616 and ISF Grants~1380/13, 382/15 and by a Schmidt Fellowship at the Institute for Advanced Study.

\section{Simplex lemma} \label{sec:sl}
Let $x_1, \dots , x_{n+1}$ be vertices of a simplex $S$ in $\mathbb{R}^n$. The symbol $diam(S)$ will denote the diameter of $S$ and
 by $width(S)$ we will denote the width of $S$, i.e. the minimum distance between a pair of parallel hyperplanes such that $S$ is contained between them. Define the relative width of $S$: $w(S)= width(S) / diam(S)  $.
 Let $a>0$ and  assume that $w(S) >a$. In particular we assume that $x_1, \dots , x_{n+1}$ do not lie on the same hyperplane. For the purposes of the paper there will be sufficient a particular choice of $a$, which depends on the dimension $n$ only, the choice will be specified in Section \ref{sec:nq}.
Denote by $x_0$ the barycenter of $S$.

  We will use  an Euclidean geometry lemma:
 {\it there exist $c_1>0$, $K \geq 2/a$ depending on $a, n$ only  such that if $\rho = K diam(S)$, then
 $B(x_0,\rho(1+c_1)) \subset \cup_{i=1}^{n+1}B(x_i, \rho)$.} 

 We remark that if the simplex is very degenerate ($a$ is small), then $c_1$ has to be  small and the number $K$ has to be big: $$ c_1 \to 0, K \to +\infty  \textup{ as } a \to 0.$$

\begin{lemma} \label{sl}
 Let $B_i$ be balls with centers at $x_i$ and radii not greater than $\frac{K}{2} diam(S)$, $i=1, \dots, n+1$, where $K=K(a,n)$ is from the Euclidean geometry lemma.
 There exist positive numbers $c=c(a,n)$, $C=C(a,n)\geq K$, $r=r(M,g,O,a)$, $N_0=N_0(M,g,O,a)$ such that
 if $S \subset B(O,r)$ and
if $N(B_i)> N$ for each $x_i$, $i=1 \dots {n+1}$, where $N$ is a number greater than $N_0$,
 then $N(x_0, C diam(S))> N(1+c)$.

\end{lemma}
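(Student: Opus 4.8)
The plan is to argue by contradiction. Assume $N(x_0, C\,\mathrm{diam}(S)) \le (1+c)N$, where $c=c(a,n)>0$ is to be fixed, and derive a contradiction with the hypotheses $N(B_i)>N$ once $C$ and $N_0$ are large and $c$ is small. Write $d=\mathrm{diam}(S)$ and $\rho=Kd$, with $K$ the constant from the Euclidean geometry lemma quoted above, and put $F(t):=\sup_{B(x_0,t)}|u|$; the containment $S\subset B(O,r)$ with $r$ small is what makes all the manifold estimates (\eqref{b1}, Lemma~\ref{ln3}, the normal‑coordinate comparison) available on the relevant balls.

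First I would turn the standing assumption into a two‑sided control on $F$. By almost monotonicity \eqref{b1} of the frequency — equivalently, of the doubling index via \eqref{n3} — the bound at scale $C d$ propagates to all smaller scales: $N(x_0,t)\le(1+\varepsilon)(1+c)N+C'$ for $t\le Cd$. Plugging this into Lemma~\ref{ln3} yields $\bigl(\tfrac{t_2}{t_1}\bigr)^{-A}\le \tfrac{F(t_2)}{F(t_1)}\le\bigl(\tfrac{t_2}{t_1}\bigr)^{A}$ for $\delta\le t_1\le t_2\le Cd$, where $A=(1+\varepsilon)^2(1+c)N+C''$ and $\delta$ is a fixed small radius. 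Thus the growth of $\sup|u|$ on concentric balls about $x_0$ over the entire range $[\delta,Cd]$ is governed by the single exponent $A$, only slightly larger than $(1+c)N$.

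Next I would exploit the covering $B(x_0,(1+c_1)\rho)\subseteq\bigcup_i B(x_i,\rho)$. It gives $F\bigl((1+c_1)\rho\bigr)\le\max_i\sup_{B(x_i,\rho)}|u|$; fix an index $i^*$ realizing the maximum. On the vertex side, $N(B_{i^*})>N$ together with \eqref{n3*} (legitimate since $N>N_0$) gives $\sup_{B(x_{i^*},\rho)}|u|\ge(\rho/r_{i^*})^{N(1-\varepsilon)}\sup_{B_{i^*}}|u|$, i.e.\ a definite amount of growth around $x_{i^*}$ between the scale of $B_{i^*}$ and $\rho$. Since $|x_{i^*}-x_0|\le\tfrac{n}{n+1}d=\tfrac{n}{n+1}\tfrac{\rho}{K}$ is tiny compared with $\rho$, balls of radius $\asymp\rho$ centred at $x_{i^*}$ and at $x_0$ are interleaved with multiplicative discrepancy $1+O(1/K)$; transferring the vertex estimate to $x_0$ through these inclusions and combining with the growth bound of the previous step produces a lower bound for $F\bigl((1+c_1)\rho\bigr)/F(\delta)$ which, thanks to the genuine extra room $c_1>0$ in the covering radius, strictly exceeds the upper bound $\bigl((1+c_1)\rho/\delta\bigr)^{A}$ coming from the standing assumption, provided $c$ is small enough relative to $c_1$ and $\varepsilon$. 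This contradiction forces $N(x_0,C\,\mathrm{diam}(S))>(1+c)N$ with $C=C(a,n)\ge K$, $c=c(a,n)>0$.

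The hard part is exactly this last comparison: the ``gain'' one extracts is only of the order of the covering slack $c_1\asymp a/(nK)$, so the estimates must be organized so that none of the unavoidable $(1\pm\varepsilon)$ factors, additive constants $C,C',C''$, or $O(1/K)$ ball‑discrepancies consumes it. Concretely the parameters should be chosen in the order $a,n\rightsquigarrow c_1,K$ (geometry lemma) $\rightsquigarrow c$ (a small fixed fraction of $c_1$) $\rightsquigarrow\varepsilon$ (smaller still) $\rightsquigarrow C,N_0$ (large), the additive constants being harmless only because $N>N_0$ may be taken as large as we like. A secondary technical point is that Lemma~\ref{ln3} is stated for dilation factors $t>2$, whereas some ratios above (such as $\rho/r_{i^*}$ when $r_{i^*}$ is close to $\tfrac{K}{2}d$, or $\rho/(\rho-|x_{i^*}-x_0|)$) may be smaller than $2$, and the case of very small $r_{i^*}$ — where $B_{i^*}$ need not even contain $x_0$ — has to be treated separately; both are handled by composing with, or splitting off, an auxiliary dilation so as to restore the hypothesis $t>2$ and to relate $\sup_{B_{i^*}}|u|$ to a genuinely $x_0$‑centred quantity.
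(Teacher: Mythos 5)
Your proposal has a genuine gap, and in fact the central mechanism you describe runs in the wrong direction. The paper's proof uses an \emph{outward} dilation of scale $t>2$ around the maximizing vertex $x_{i}$: from $M=\sup_{\cup_i B(x_i,\rho)}|u|\ge\sup_{B(x_0,(1+c_1)\rho)}|u|$ it passes to $\sup_{B(x_i,t\rho)}|u|\ge M\,t^{N(1-\varepsilon)}$ by \eqref{n3*}, transfers the \emph{outer} ball to $B(x_0,t\rho(1+\delta))$ with $\delta=\delta(t)\to0$ as $t\to\infty$, and then reads off the doubling index of $x_0$ from a radius ratio $\frac{t(1+\delta)}{1+c_1}<t$. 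Here the covering slack $(1+c_1)$ sits in the \emph{denominator}, so it reduces the radius ratio below $t$ while the growth still corresponds to exponent $t$; and the interleaving error $\delta(t)\sim\tfrac{1}{tK}$ is made as small as we like by choosing $t$ large, so it never consumes $c_1$. No contradiction argument is needed; the conclusion $\tilde N>(1+c)N$ is immediate.

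Your version replaces this by an \emph{inward} doubling between $\rho$ and $r_{i^*}\le\rho/2$ around $x_{i^*}$ --- at most a factor of $2$ in radius --- and then tries to transfer to $x_0$-centred balls across the displacement $|x_{i^*}-x_0|\le \rho/K$. But at these small scales the interleaving discrepancy is $O(1/K)$ of the radius, and since $\bigcup_i B(x_i,\rho)\subset B(x_0,\rho(1+\tfrac1K))$, the covering lemma forces $c_1\le 1/K$: the slack you hope to exploit never exceeds the interleaving error that eats it. Concretely, after any valid transfer you would compare a growth of at most $(\rho/r_{i^*})^{N(1-\varepsilon)}$ against $x_0$-centred radii whose ratio (after interleaving) \emph{exceeds} $\rho/r_{i^*}$, which yields $\tilde N\lesssim N$, the opposite of what you need. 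The logical step in your write-up also does not close: the covering gives an \emph{upper} bound $F((1+c_1)\rho)\le\sup_{B(x_{i^*},\rho)}|u|$, while you assert it produces a \emph{lower} bound on $F((1+c_1)\rho)/F(\delta)$; no combination of that upper bound with the vertex growth estimate (which is a lower bound on $\sup_{B(x_{i^*},\rho)}|u|$ in terms of the tiny quantity $\sup_{B_{i^*}}|u|$) delivers a lower bound on $F$ at radius $(1+c_1)\rho$. The ``auxiliary dilation'' you defer is not a side technicality but exactly the operation that destroys the gain. To repair the proof you would have to switch to the outward dilation as in the paper: choose $t=t(a,n)$ large enough that $\delta(t)<c_1/2$, put the $(1+c_1)$ in the denominator, and only then send $\varepsilon,c\to0$.
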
 
\begin{proof}
 
In view of almost monotonicity of the doubling index \eqref{n3} we will assume that all $B_i$ have the same radius $\rho =K diam(S)$.

Let $M$ be the supremum of $|u|$ over the union of $B(x_i, \rho)$, then $|u|$ is not greater than $M$ in $B(x_0,\rho(1+c_1))$ and  $\sup \limits_{B(x_i, \rho)} |u| =M$ for some $i$.
 Let  $t>2$ and $\varepsilon>0$, these parameters will be specified later.   Assume that  \eqref{n3*} holds for $B(x_i, \rho t) $,  then $ \sup \limits_{B(x_i, \rho t) }|u| \geq M t^{N(1-\varepsilon)}$.

We need a metric geometry fact, which follows from the triangle inequality: {\it there exists $\delta=\delta(t) \in (0,1)$ such that $B(x_i, \rho t) \subset B(x_0, \rho t(1+\delta)) $ and $\delta(t) \to 0 $ as $t \to +\infty$. \it}

Let $\tilde N$ be the doubling index for $B(x_0, \rho t(1+\delta))$. Suppose \eqref{n3} holds for the pair of balls $B(x_0, \rho t(1+\delta))$ and $B(x_0, \rho(1+c_1))$, then
$$
\left[\frac{t(1+\delta)}{1+c_1}\right]^{\tilde N (1+\varepsilon) + C} \geq \frac{\sup\limits_{B(x_0, \rho t(1+\delta))}|u|}{\sup\limits_{B(x_0, \rho(1+c_1))}|u|} \geq  \frac{\sup\limits_{B(x_i, \rho t)}|u|}{\sup\limits_{B(x_0, \rho(1+c_1))}|u|} \geq \frac{M t^{N(1-\varepsilon)}}{M}= t^{N(1-\varepsilon)}.
$$
 Hence 
\begin{equation} \label{eq:sl1}
\left[\frac{t(1+\delta)}{1+c_1}\right]^{\tilde N (1+\varepsilon) + C} \geq t^{N(1-\varepsilon)}.
\end{equation}

  Now, we  specify our choice of parameters. We first choose $t>2$ so that $\delta(t)< c_1/2$, then 
\begin{equation} \label{eq:sl2}
\frac{t(1+\delta)}{1+c_1} \leq t^{1-c_2}
\end{equation}
 for some $c_2=c_2(t,c_1) \in (0,1)$. Second, we choose $\varepsilon=\varepsilon(c_2) >0$ and $c=c(c_2)>0$ such that
\begin{equation} 
 \frac{1-\varepsilon}{(1+\varepsilon)(1-c_2)}> 1+2c.
\end{equation}
 Third, we choose 
$R=R(\varepsilon,M,g,O) >0$ and $N_0=N_0(\varepsilon,M,g,O)$ such that Lemma \ref{ln3} holds for these parameters and put $r:=R/(10Kt)$. This choice of $r$ provides \eqref{n3} for the pair of balls $B(x_0, \rho t(1+\delta))$ and $B(x_0, \rho(1+c_1))$ and  \eqref{n3*} for $B(x_i, \rho t)$. Hence
the inequality \eqref{eq:sl1} holds and \eqref{eq:sl2} gives $$t^{(1-c_2)(\tilde N (1+\varepsilon) +C)} \geq t^{N(1-\varepsilon)}.$$  We therefore have $$\tilde N \geq N \frac{(1-\varepsilon)}{(1+\varepsilon)(1-c_2)} - C_1 \geq N(1+2c) - C_1 \geq N(1+c) + (c N_0 - C_1).$$ We can also ask $N_0$ to be big enough so that $c N_0 - C_1>0$. Thus $$\tilde N > N(1+c).$$

\end{proof}

\section{Propagation of smallness of the Cauchy data}
 
 If one considers  a smooth Riemannian metric $g$ in a unit cube $Q$  in $\mathbb{R}^n$, then any harmonic function $u$ (with respect to $g$) 
 satisfies $Lu=0$, where $L$  is a uniformly elliptic (in a slightly smaller cube) operator of second order in the divergence form with smooth coefficients.  Consider a cube $q \subset \frac{1}{2}Q$ with  side $r$
 and let $F$ be a face of $ q$.
 In this section we formulate a result that we will refer to as the propagation of smallness of the Cauchy data for elliptic PDE. See Lemma 4.3 in \cite{L} and Theorem 1.7 in \cite{Cauchy} for the proof of the result below, which we bring not in full generality but in a convenient way for our purposes.

{\it Suppose that $|u|\leq 1$ in $q$. There exist $C>0$ and $\alpha \in(0,1)$, depending on $L $ only  such that  if $|u|\leq \varepsilon$ on $F$ and $|\nabla u| \leq \frac{\varepsilon}{r}$ on $F$, $\varepsilon<1$, then } 
\begin{equation} \label{eq:ps}
\sup\limits_{\frac{1}{2}q}|u| \leq C \varepsilon^{\alpha}.
\end{equation}
\begin{remark}
 We will apply propagation of smallness of the Cauchy data in the case when the coefficients of the operator $L$ are sufficiently close in the $L^\infty$ norm to the coefficients of the
 standard Laplace operator $\Delta$ in $B(O,R_0)$ and the derivatives of coefficients $L$ are sufficiently small. Under these assumptions $\alpha$ can be chosen to depend only on $n$, see Theorem 1.7 in \cite{Cauchy}.
\end{remark}
\section{Hyperplane lemma}

 Given a cube $Q$, we will denote $\sup\limits_{x \in Q, r \in (0,diam(Q))} N(x,r)$ by $N(Q)$ and call it the doubling index of $Q$. This definition is  different than a doubling index for balls but more convenient in the following sense. If a cube $q$ is contained in a cube $Q$, then $N(q)\leq N(Q)$. Furthermore if a cube $q$ is covered by  cubes $Q_i$ with $diam(Q_i) \geq diam(q)$, then $N(Q_i)\geq N(q)$ for some $Q_i$.  

\begin{lemma} \label{lp1}
 Let $Q$ be a cube $[-R, R]^n $ in $\mathbb{R}^n$.
 Divide $Q$ into $(2A+1)^n$  equal subcubes $q_i$ with side-length $\frac{2R}{2A+1}$. 
 Consider the cubes $q_{i,0}$ that have non-empty intersection with the hyperplane $x_n=0$.
Suppose that for each  $q_{i,0}$ there exists $x_i \in q_{i,0}$ and $r_i < 10 diam(q_{i,0}) $ such that $N(x_i,r_i) > N$, where $N$ is a given positive number. 
Then there exist  $A_0=A_0(n)$, $R_0=R_0(M,g,O)$, $N_0 =N_0(M,g,O)$  such that if $A>A_0$, $N> N_0$, $R<R_0$ then $N(Q)> 2 N $.
\end{lemma}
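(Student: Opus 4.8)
The plan is to argue by contradiction. Assume $N(Q)\le 2N$; I will derive a contradiction once $A>A_0(n)$ and $N>N_0(M,g,O)$ (keeping $R<R_0(M,g,O)$ throughout, so that \eqref{eqm}, the hypotheses of Lemma~\ref{ln3}, and those needed for \eqref{eq:ps} all hold on the relevant balls). Normalize $\sup_Q|u|=1$, and discard the subcubes $q_{i,0}$ not contained in $\tfrac14 Q$; there still remain $\gtrsim A^{n-1}$ of them, and these cover a fixed full-dimensional slab $\Sigma:=[-\tfrac R8,\tfrac R8]^{n-1}\times[-\tfrac R{2A+1},\tfrac R{2A+1}]$ around the hyperplane $\{x_n=0\}$. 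For each remaining $q_{i,0}$ the hypothesis supplies $x_i$ and $r_i<10\,\mathrm{diam}(q_{i,0})$ with $N(x_i,r_i)>N$; since $r_i<\rho:=10\,\mathrm{diam}(q_{i,0})$, the almost-monotonicity of the frequency \eqref{b1} together with the elliptic estimates \eqref{h1}--\eqref{h2} (exactly as in the proof of Lemma~\ref{ln3}) lets me replace $r_i$ by the common radius $\rho$ at the cost of replacing $N$ by $N(1-\eps)$; after renaming I keep writing $N(x_i,\rho)>N$.

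The step that makes a large $A$ decisive is this. Let $T$ be the largest integer with $B(x_i,T\rho)\subset Q$; since $x_i\in\tfrac14 Q$ and $\rho=\tfrac{20R\sqrt n}{2A+1}$, we get $T\gtrsim A/\sqrt n$ (in particular $T>2$ for $A$ large). The lower bound in \eqref{n3} applied to $B(x_i,\rho)\subset B(x_i,T\rho)$, together with $\sup_{B(x_i,T\rho)}|u|\le\sup_Q|u|=1$, gives
\begin{equation*}
\sup_{B(x_i,\rho)}|u|\ \le\ T^{-N(1-\eps)+C}\ =:\ 2^{-\gamma N},\qquad \gamma=\gamma(A,n,\eps)\ \asymp\ \log_2 A ,
\end{equation*}
and $\gamma$ can be made as large as we please by taking $A=A(n)$ large. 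Because $\rho>\mathrm{diam}(q_{i,0})$ each $B(x_i,\rho)$ contains its $q_{i,0}$, so these balls cover $\Sigma$ and hence $\sup_\Sigma|u|\le 2^{-\gamma N}$. Standard interior gradient estimates for $Lu=0$ then give $|\nabla u|\le\tfrac{C(n)}{R}\,2^{-\gamma N}$ on the thinner cross-section $F:=[-\tfrac R{16},\tfrac R{16}]^{n-1}\times\{0\}$; since $A$, hence $C(n)$, is fixed this is $\le\tfrac1R\,2^{-\gamma N/2}$ once $N>N_0(n)$.

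Next I apply the propagation of smallness of the Cauchy data \eqref{eq:ps} to the cube $q:=[-\tfrac R{16},\tfrac R{16}]^{n-1}\times[0,\tfrac R8]\subset\tfrac12 Q$, which has $F$ as a face lying on $\{x_n=0\}$; the exponent there may be taken to be $\alpha=\alpha(n)\in(0,1)$ since in $B(O,R_0)$ the operator $L$ is as close to $\Delta$ as we wish (cf.\ \eqref{eqm} and the remark following \eqref{eq:ps}). With $\eps_\ast:=2^{-\gamma N/2}$ the Cauchy data of $u$ on $F$ are bounded by $\eps_\ast$ and $\eps_\ast/\mathrm{side}(q)$ respectively, so $\sup_{\frac12 q}|u|\le C\eps_\ast^{\alpha}\le 2^{-\alpha\gamma N/4}$ for $N>N_0(n)$; hence there is a point $z_0$ with $\mathrm{dist}(z_0,\partial Q)\gtrsim R$ and $\sup_{B(z_0,R/64)}|u|\le 2^{-\alpha\gamma N/4}$. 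On the other hand $\sup_Q|u|=1$ is attained at some $p^\ast$, and $|z_0-p^\ast|<\mathrm{diam}(Q)=2R\sqrt n$ strictly (as $z_0$ is not a vertex of $Q$), so $\sup_{B(z_0,\,\frac32 R\sqrt n)}|u|\ge1$; since $z_0\in Q$ and $\tfrac32 R\sqrt n<\mathrm{diam}(Q)$ we have $N(z_0,\tfrac32 R\sqrt n)\le N(Q)\le 2N$, and the upper bound in \eqref{n3} applied to $B(z_0,R/64)\subset B(z_0,\tfrac32 R\sqrt n)$ yields
\begin{equation*}
2^{\alpha\gamma N/4}\ \le\ \frac{\sup_{B(z_0,\frac32 R\sqrt n)}|u|}{\sup_{B(z_0,R/64)}|u|}\ \le\ (96\sqrt n)^{\,2N(1+\eps)+C}.
\end{equation*}
Dividing by $N$ and taking $N$ large, this forces $\alpha\gamma\le C_\ast(n)$; choosing $A=A_0(n)$ large enough that $\gamma>C_\ast(n)/\alpha$ is then a contradiction, so in fact $N(Q)>2N$.

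The only real difficulty is the bookkeeping of constants that turns the gain into a genuine factor $2$ rather than merely $1+o(1)$: the exponent $\gamma\asymp\log_2 A$ of smallness produced near the hyperplane must be arranged to beat the loss, of order $(\log_2\sqrt n)/\alpha$, suffered when that smallness is transported from the thin slab $\Sigma$ back across the whole cube — this is exactly what dictates how large $A_0(n)$ must be. It is also why $N_0$ has to be taken large (to absorb the additive constants $C$ in \eqref{n3}, the factor $C(n)$ from the gradient estimate, and the multiplicative constant in \eqref{eq:ps}) and why $R_0$ has to be small (so that every ball appearing above lies inside the normal-coordinate neighborhood where \eqref{eqm} and Lemma~\ref{ln3} are valid).
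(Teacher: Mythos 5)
Your proof is correct and takes essentially the same route as the paper's: the many high doubling indices near the hyperplane force $|u|$ and $|\nabla u|$ to be of size $2^{-cN\log A}$ on a slab around $\{x_n=0\}$, the propagation-of-smallness estimate \eqref{eq:ps} transfers this smallness to a half-cube with a face on the hyperplane, and comparing against the global supremum forces a doubling index $\gtrsim N\log A$, which exceeds $2N$ once $A$ is large. The only cosmetic differences are that you argue by contradiction while the paper estimates the doubling index of the half-cube directly, and that your common radius should be taken as $\rho=20\,\mathrm{diam}(q_{i,0})$ rather than $10\,\mathrm{diam}(q_{i,0})$ so that $\rho\ge 2r_i$ holds, which is what the passage from $N(x_i,r_i)>N$ to $N(x_i,\rho)\gtrsim N$ via frequency almost-monotonicity actually requires.
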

\begin{proof}
  We will ask $R_0$ to be small enough so that  Lemma \ref{ln3} holds with $\varepsilon =1/2$ and $10 n\cdot  R_0$ in place of $R$ in Lemma \ref{ln3}. Also  we may assume that coefficients of $L$ are close to the coefficients of the standard Laplacian in $C^1(B(O, 10 n\cdot  R_0))$ to be able to use \eqref{eq:ps}. We have described our choice of $R_0$.
 
For the sake of simplicity we will assume that $R=1/2$ and  $R_0 \geq 1/2$. The general case follows by changing the scale in the argument below.

Let $B$ be the unit ball $B(O,1)$ and
let $M$ be the supremum of $u$ over $\frac{1}{8} B$.
 For each $x_i$ in $\frac{1}{16} B$ the ball $B(x_i, 1/32)$ is contained in $1/8 B$. Hence $\sup \limits_{B(x_i, 1/32)}|u| \leq M$. Using $N(x_i, r_i) > N$ and \eqref{n3} with $\varepsilon =1/2$ we get 
$$\sup \limits_{2q_{i,0}}|u| \leq \sup \limits_{B(x_i,\frac{4\sqrt n}{2A+1})}|u| \leq  C \sup\limits_{B(x_i, 1/32)}|u|  \left(\frac{128 \sqrt n}{2A+1}\right)^{\frac{N}{2}} \leq  M 2^{-c N \log A},$$ where  
$c=c(n)>0$. In the last inequality we assumed that $A > A_0 (n)$ and $N$ is sufficiently large .

 By a standard elliptic estimate  
$$\sup \limits_{q_{i,0}}| \nabla u| \leq C A \sup \limits_{2q_{i,0}}|u| \leq C A M  2^{-cN  \log A}\leq   M 2^{-c_1(n)N  \log A}.$$
Thus $|u|$ and $|\nabla u|$ are bounded by $ M 2^{-c_1N  \log A}$ on $\frac{1}{8} B \cap \{x_n=0\}$.

 Let $q$ be a cube with side $\frac{1}{16 \sqrt n }$  in the halfspace $\{x_n>0 \}$ such that $q \subset \frac{1}{8} B$ and $$  \frac{1}{32 \sqrt n} B \cap \{x_n=0\} \subset \partial q \cap \{x_n=0\}.$$ In other words, $q$ has a face $F$ on the hyperplane $\{x_n=0\}$. Let $p$ be the center of $q$, then $B(p,\frac{1}{32\sqrt n}) \subset q $. 

 Consider the function $v= \frac{u}{  M}$, which absolute value is not greater than $1$ in $q$. The Cauchy data of $v$ is small on $F$: $|v|$ and $|\nabla v|$ are smaller than $  2^{-c_1N  \log A}$. Denote $  2^{-c_1N  \log A}$ by $\varepsilon$. 
Applying propagation of smallness for the Cauchy data, we obtain $\sup\limits_{\frac{1}{2}q}|v| \leq \varepsilon^\alpha $. In terms of $u$ we have $\sup\limits_{\frac{1}{2}q}|u| \leq M   \varepsilon^\alpha = M 2^{-\alpha c_1 N  \log A }$. 

 The ball $B(p,\frac{1}{64\sqrt n})$ is contained in $\frac{1}{2}q$ and therefore $$\sup\limits_{B(p,\frac{1}{64\sqrt n})}|u| \leq M 2^{-\frac{\alpha c N}{2}  \log A }.$$ 
 However $\sup\limits_{B(p,1/2)}|u| \geq M$ since $\frac{1}{8} B \subset B(p,1/2)$. Hence
 $$ \frac{\sup\limits_{B(p,1/2)}|u|}{\sup\limits_{B(p,\frac{1}{64\sqrt n})}|u|} \geq 2^{ \alpha c_1 N  \log A }.$$
 Denote by $\tilde N$ the doubling index for $B(p,1/2)$. By \eqref{n3} with $\varepsilon=1/2$ we have 
$$ \frac{\sup\limits_{B(p,1/2)}|u|}{\sup\limits_{B(p,\frac{1}{64\sqrt n})}|u|} \leq (64\sqrt n)^{\tilde N/ 2}.$$ Hence $\tilde N \geq c_2 N \log A$ for some $c_2=c_2(n)>0$, and $\tilde N \geq 2N$ for $A$ big enough.
 \end{proof}
\begin{corollary} \label{lp2} Let $Q$ be a cube $[-R, R]^n $ in $\mathbb{R}^n$ and $N(Q) $ is not greater than a number $ N$. For any $\varepsilon >0$ there exists an odd positive integer $A_1=A_1(n,\varepsilon)$ such that the following holds.
Let us divide $Q$ into $A_1^n$ smaller equal subcubes $q_i$ and
 consider the cubes $q_{i,0}$ that have non-empty intersection with the hyperplane $x_n=0$.
If  $N> N_0(M,g,O)$, $R<R_0(M,g,O)$, then
 the number of subcubes $ q_{i,0}$ that have doubling index  greater than $N/2$  is less than $\varepsilon  A_1^{n-1}$.
\end{corollary}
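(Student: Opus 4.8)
The plan is to derive Corollary \ref{lp2} from Lemma \ref{lp1} by a counting/contradiction argument combined with the subcube monotonicity of the doubling index $N(\cdot)$. First I would fix $\varepsilon>0$ and choose the integer $A_1$ large in terms of $n$ and $\varepsilon$; the precise largeness will be dictated by two requirements that emerge below: it must exceed the threshold $A_0(n)$ of Lemma \ref{lp1}, and it must satisfy a purely combinatorial inequality ensuring that $\varepsilon A_1^{n-1}$ is still much larger than $(2A_0+1)^{n-1}$.

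The core idea: suppose, for contradiction, that at least $\varepsilon A_1^{n-1}$ of the subcubes $q_{i,0}$ meeting $\{x_n=0\}$ have doubling index greater than $N/2$. Each such "bad" subcube $q_{i,0}$ has, by definition of the doubling index of a cube, some point $x_i\in q_{i,0}$ and some radius $r_i<\operatorname{diam}(q_{i,0})$ with $N(x_i,r_i)>N/2$. Now I want to find, inside $\{x_n=0\}$, a subgrid of the $(n-1)$-dimensional array of subcubes at a coarser scale — partition $Q$ into $(2A_0+1)^n$ congruent cubes $Q_j$ (with $A_0=A_0(n)$ the constant of Lemma \ref{lp1}) — such that one of these coarse cubes $Q_j$ contains, among its fine subcubes $q_{i,0}$ meeting its central hyperplane slice, enough bad ones to invoke Lemma \ref{lp1} on $Q_j$. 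By pigeonhole: there are $(2A_0+1)^{n-1}$ coarse cubes meeting $\{x_n=0\}$, and if $\varepsilon A_1^{n-1}\geq (2A_0+1)^{n-1}$ bad fine subcubes are distributed among them, some coarse cube $Q_j$ receives at least $\varepsilon A_1^{n-1}/(2A_0+1)^{n-1}$ of them. Choosing $A_1$ a large multiple of $2A_0+1$, each coarse cube $Q_j$ is itself subdivided into an integer number $(A_1/(2A_0+1))^n$ of fine subcubes, and I need EVERY fine subcube of $Q_j$ meeting the hyperplane to be bad in order to apply Lemma \ref{lp1}'s hypothesis "for each $q_{i,0}$ there exists $x_i,r_i$ with $N(x_i,r_i)>N$". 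This is the subtle point — Lemma \ref{lp1} requires all slice-subcubes to carry large doubling index, not just a positive fraction.

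To resolve this I would iterate a dyadic-type selection: within the fraction of bad subcubes I have, I look for a coarse subcube all of whose fine hyperplane-subcubes are bad. Equivalently, I argue by a clean averaging: partition the $(n-1)$-dimensional slice array of $A_1^{n-1}$ fine subcubes into $(A_1/(2A_0+1))^{n-1}$ blocks, each block being the slice of one coarse cube $Q_j$ and consisting of $(2A_0+1)^{n-1}$ fine subcubes. Wait — I actually want the reverse grouping: let $m=A_1/(2A_0+1)$ and regard the slice as $m^{n-1}$ copies of a $(2A_0+1)^{n-1}$ pattern. Hmm, the cleaner route: apply Lemma \ref{lp1} in contrapositive form directly at an intermediate scale. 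If the number of bad fine subcubes is at least $\varepsilon A_1^{n-1}$, then by choosing $A_1 = (2A_0+1)\cdot m$ and observing that the bad subcubes cannot all avoid some coarse cube $Q_j$'s full hyperplane slice once $\varepsilon$ is not too small relative to $1/(2A_0+1)^{n-1}$... this is exactly where a more careful combinatorial lemma is needed. I expect the main obstacle to be precisely this: bridging the gap between "a positive fraction of slice-subcubes are bad" and "all slice-subcubes of some coarse subcube are bad," which Lemma \ref{lp1} demands. The fix is to choose $A_1$ so large that even a tiny fraction $\varepsilon$ guarantees, by a volume/pigeonhole count against the $(2A_0+1)^{n-1}$ coarse slices, that some coarse slice is entirely bad; since each fine subcube satisfies the large-doubling condition with the SAME value $N/2$, and Lemma \ref{lp1} then yields $N(Q_j)>2\cdot(N/2)=N$. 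But $Q_j\subset Q$ forces $N(Q_j)\le N(Q)\le N$ by the subcube monotonicity of the doubling index, a contradiction. Hence fewer than $\varepsilon A_1^{n-1}$ bad subcubes, as claimed. The hypotheses $N>N_0$ and $R<R_0$ are inherited directly from Lemma \ref{lp1} applied to the subcube $Q_j$ (whose size is comparable to $R$ up to the dimensional constant $m$).
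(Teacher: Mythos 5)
Your proposal correctly identifies the contrapositive use of Lemma \ref{lp1} and correctly identifies where the difficulty lies, but the ``fix'' you offer does not work, and this is a genuine gap. Pigeonhole at a single intermediate scale cannot produce a \emph{fully} bad coarse slice. Concretely: if you partition $Q$ into $(2A_0+1)^n$ coarse cubes and further into $A_1^n$ fine cubes with $A_1 = m(2A_0+1)$, each coarse slice contains $m^{n-1}$ fine slice-subcubes. With $\varepsilon A_1^{n-1}$ bad fine subcubes distributed among $(2A_0+1)^{n-1}$ coarse slices, pigeonhole only guarantees some coarse slice has at least $\varepsilon A_1^{n-1}/(2A_0+1)^{n-1} = \varepsilon m^{n-1}$ bad subcubes --- that is a fraction $\varepsilon < 1$ of its capacity, not all of it. This ratio is independent of how large $A_1$ is, so no choice of $A_1$ makes ``some coarse slice entirely bad'' follow. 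Lemma \ref{lp1} requires \emph{every} slice-subcube of the cube it is applied to to carry the large doubling index, and a fractional count never delivers that.

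The argument that does close is a multi-scale iteration, not a single-scale pigeonhole. Take $A_1 = (2A_0+1)^k$ and refine $Q$ in $k$ stages, each stage dividing every cube into $(2A_0+1)^n$ subcubes. At each stage, every surviving bad slice-cube $q$ still satisfies $N(q) \le N(Q) \le N$, so the contrapositive of Lemma \ref{lp1} (applied to $q$ with threshold $N/2$, using the remark that the lemma is translation/scale invariant) forces at least one of its $(2A_0+1)^{n-1}$ slice-subcubes to have doubling index $\le N/2$. Thus if $M_j$ denotes the number of bad slice-cubes at stage $j$, one has $M_{j+1} \le M_j\bigl((2A_0+1)^{n-1}-1\bigr)$, hence $M_k \le \bigl(1 - (2A_0+1)^{-(n-1)}\bigr)^k (2A_0+1)^{k(n-1)}$, and choosing $k$ so that $\bigl(1 - (2A_0+1)^{-(n-1)}\bigr)^k < \varepsilon$ finishes. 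The point you were missing is that each bad cube loses \emph{at least one} good slice-subcube at \emph{every} scale, and it is the compounding of these losses over $k$ scales --- not a one-shot pigeonhole --- that beats an arbitrary $\varepsilon$.
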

\begin{proof}
 According to Lemma \ref{lp1} we can choose an integer $A_0$ and $N_0>0$, assume $N>N_0$, 
 partition $Q$ into $(2A_0+1)^n$ equal subcubes, and then  at least one subcube with non-empty intersection with $\{ x_n=0\}$ has doubling index smaller than $N/2$.  
 
 Now, let us partition $Q$ into $(2A_0+1)^{kn}$ equal subcubes $q_i$ and denote by $M_k$ the number of subcubes  with non-empty intersection with $\{ x_n=0\}$ and doubling index greater than $N/2$. If a cube $q_i$ has doubling index smaller than $N/2$, then any its subcube also does. 

It is not important in the proof of  Lemma \ref{lp1} that $Q$ is a cube with center at the origin, the same argument shows that if we divide  a cube $q_i$, which has non-zero intersection with $\{ x_n=0\}$,  into  $(2A_0+1)^n$ equal subcubes, then at least one subcube  with non-empty intersection with $\{ x_n=0\}$ has doubling index smaller than $N/2$. This observation gives $M_{k+1} \leq M_k ((2A_0+1)^{n-1} -1)$.

 Thus  $M_k \leq (1-\frac{1}{(2A_0+1)^{n-1}})^k (2A_0+1)^{k(n-1)}$. Choosing $k$ so that $(1-\frac{1}{(2A_0+1)^{n-1}})^k \leq \varepsilon$, we finish the proof.
\end{proof}
 \begin{remark} 
 The same argument shows that  in Lemma \ref{lp1} and in Corollary \ref{lp2} one can replace  $Q$ by any its homothety-rotation-shift copy $ Q_r \subset B(O,R_0)$  , $r \in (0,1)$, $R_0=R_0(M,g,O)$ and replace the hyperplane $\{ x_n = 0\} $ by a hyperplane that contains the center of $Q_r$ and is parallel to one of its faces,  Lemma \ref{lp1} and Corollary \ref{lp2} will remain true with $A_1$,$A_0$ and $N_0$ independent of $r$.
 \end{remark}

\section{Number of cubes with big doubling index} \label{sec:nq}
In this section we follow notation from Sections \ref{sec:di} and \ref{sec:sl}. 
The next theorem seems to be a useful tool in nodal geometry. We will apply it later to obtain upper estimates of the volume of the nodal sets in terms of the doubling index. 
\begin{theorem} \label{th1/2}
  There exist   constants $c>0$, an integer $A$ depending on the dimension $d$ only and positive numbers $N_0=N_0(M,g,O)$, $r=r(M,g,O)$ such that for any cube $Q\subset B(O,r)$ the following holds: if we partition $Q$ into $A^n$ equal subcubes, then the number of subcubes with doubling index greater than $\max(N(Q)/(1+c),N_0)$ is less than $\frac{1}{2} A^{n-1}$.
 
\end{theorem}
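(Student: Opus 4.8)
The idea is to combine the simplex lemma (Lemma \ref{sl}) with the hyperplane lemma (Corollary \ref{lp2}) in a dichotomy argument. Fix the relative-width parameter $a=a(n)$ (to be chosen below), and let $K=K(a,n)$, $c=c(a,n)$ be the constants coming from Lemma \ref{sl}. Set $N':=\max(N(Q)/(1+c),N_0)$. Partition $Q$ into $A^n$ equal subcubes for a large integer $A=A(n)$ to be specified, and call a subcube \emph{bad} if its doubling index exceeds $N'$. We want to show the bad subcubes number fewer than $\tfrac12 A^{n-1}$. Suppose, toward a contradiction, that there are at least $\tfrac12 A^{n-1}$ bad subcubes. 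The plan is to show this forces a ``fat'' configuration of bad subcubes — either many bad subcubes clustered near some hyperplane (contradicting the hyperplane lemma), or $n+1$ bad subcubes whose centers form a simplex of relative width $>a$ (contradicting the simplex lemma, since $N(Q)$ would then have to be larger than it is).

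First I would set up the combinatorial geometry. View the $A^n$ subcubes as the lattice $\{1,\dots,A\}^n$. A collection of bad subcubes is ``flat'' if all their centers lie within distance $\eta \cdot \mathrm{diam}(Q)$ of some hyperplane, for a small $\eta=\eta(n)$; otherwise the collection contains $n+1$ points forming a simplex of relative width $\geq a$ for some fixed $a=a(n)>0$. Quantitatively: if a set of $\geq \tfrac12 A^{n-1}$ lattice points in $\{1,\dots,A\}^n$ is \emph{not} contained in an $\eta A$-neighborhood of any hyperplane, then (for $A$ large depending on $n$) it contains $n+1$ points whose simplex has relative width $>a$; this is an elementary, purely combinatorial fact about point sets, proved by iteratively picking points that maximize distance to the affine span of those chosen so far. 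So we split into two cases.

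In the \emph{simplex case}, we have bad subcubes whose centers $x_1,\dots,x_{n+1}$ form a simplex $S$ with $w(S)>a$ and $\mathrm{diam}(S)\le \sqrt n\,\mathrm{diam}(Q)$. Each $x_i$ is the center of a bad subcube, so there is a ball $B_i$ at $x_i$ of radius comparable to $\mathrm{diam}(Q)/A$ — in particular $\le \tfrac{K}{2}\mathrm{diam}(S)$ once $A$ is large enough relative to $K=K(a,n)$ — with $N(B_i)>N'$. Since $N'\ge N_0$, Lemma \ref{sl} (applied with $r=r(M,g,O,a)$, requiring $Q\subset B(O,r)$, and with $N_0$ enlarged appropriately) gives $N(x_0,C\,\mathrm{diam}(S))>N'(1+c)\ge N(Q)$. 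But $C\,\mathrm{diam}(S)\le C\sqrt n\,\mathrm{diam}(Q)$, and provided $C\sqrt n\,\mathrm{diam}(Q) < \mathrm{diam}(Q)$ fails, we instead note that the ball $B(x_0, C\,\mathrm{diam}(S))$ lies in a slightly dilated copy of $Q$ — this is the one technical point: we must run the whole argument inside a cube slightly larger than $Q$, or equivalently shrink to the central subcube, so that $B(x_0,C\,\mathrm{diam}(S))$ is still inside $B(O,r)$ and the doubling index of $B(x_0,C\,\mathrm{diam}(S))$ is bounded by $N(\tilde Q)$ for the enclosing cube $\tilde Q$; choosing $\tilde Q$ comparable to $Q$, $N(\tilde Q)$ is controlled by $N(Q)$ up to a bounded additive error absorbed into $N_0$ and into shrinking $c$. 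This contradicts the definition of $N'$.

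In the \emph{hyperplane case}, all $\geq \tfrac12 A^{n-1}$ bad subcubes lie within $\eta A$ subcube-widths of a hyperplane $\Pi$; by a pigeonhole over the $2\eta A+1$ hyperplane-parallel slabs, some single slab — hence, after rounding, a hyperplane parallel to a coordinate hyperplane through the center of a sub-cube (applying the Remark after Corollary \ref{lp2} about homothety-rotation-shift copies) — meets at least $\tfrac{1}{4\eta A}\cdot A^{n-1}=\tfrac{1}{4\eta}A^{n-2}$ bad subcubes; refining once more we arrange that the relevant face-parallel hyperplane through a subcube of a mild subdivision contains $\ge \varepsilon A_1^{n-1}$ subcubes of doubling index $>N(Q)/2$ for any prescribed $\varepsilon$, which contradicts Corollary \ref{lp2} once $\eta$ is small and $A$ (hence $A_1$) is large enough, because $N'\ge N(Q)/(1+c)>N(Q)/2$ makes every bad subcube have doubling index $>N(Q)/2$. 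Putting the two cases together yields the theorem; finally one fixes $a=a(n)$, then $K,C,c$, then $\eta$, then $A$, then $r$ and $N_0$, in that dependency order.

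I expect the main obstacle to be the bookkeeping of the two geometric reductions — specifically, ensuring in the simplex case that $B(x_0,C\,\mathrm{diam}(S))$ stays inside the admissible ball $B(O,r)$ and that its doubling index is genuinely controlled by (essentially) $N(Q)$ rather than something larger, which forces the ``work in a slightly enlarged cube / central subcube'' device and the correct ordering of the constants; and, on the combinatorial side, proving cleanly the flat-or-fat-simplex dichotomy with explicit dependence of $a$ and $\eta$ on $n$ only.
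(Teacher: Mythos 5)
Your plan has the right ingredients: the ``fat'' case is handled by Lemma \ref{sl} and the ``flat'' case by Corollary \ref{lp2}, and your flat-or-fat dichotomy is logically the same as the paper's ``first show the bad set has small relative width via the simplex lemma, then feed the flat configuration to the hyperplane lemma.'' However, the single-scale version you propose has a genuine gap, and you in fact flag it yourself without resolving it. In the simplex case, the bad centers live in $Q$ and the Euclidean fact you need produces a simplex $S$ with $w(S)>a$ \emph{and} $\mathrm{diam}(S) > a\cdot\mathrm{diam}(Q)$ for some $a=a(n)$. The simplex lemma's conclusion involves the ball $B(x_0, C\,\mathrm{diam}(S))$ with $C\ge K\geq 2/a$, so $C\,\mathrm{diam}(S)\ge 2\,\mathrm{diam}(Q)$: this ball necessarily pokes out of $Q$, and the inequality $N(x_0, C\,\mathrm{diam}(S))> (1+c)N(Q)$ contradicts nothing, because $N(Q)$ only supremizes doubling indices of balls that sit inside $Q$. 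Your two proposed patches do not close this: replacing $Q$ by a fixed dilate $\tilde Q$ does not help because there is no a priori bound $N(\tilde Q) \le N(Q)+O(1)$ (the doubling index of a larger cube can be arbitrarily bigger, and Lemma \ref{l6} only compares balls at comparable scales with nearby centers, not nested cubes of very different sizes); shrinking to a central subcube changes the object being counted, since the theorem asserts a bound on bad subcubes of $Q$, not of a small interior copy, and at scale $1$ the bad fraction could be $100\%$.

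The paper resolves exactly this obstruction by a multi-scale iteration that your proposal omits. It repeatedly subdivides $Q$ into $(2A_0+1)^{nj}$ subcubes and proves Lemma \ref{ldi1}: once $j>j_0$ (so that $\mathrm{diam}(q)\le \mathrm{diam}(Q)/3^{j}$ and hence $C_0\,\mathrm{diam}(S)\le C_0\,\mathrm{diam}(q)\le \mathrm{diam}(Q)$, keeping the simplex-lemma ball inside $Q$), every bad cube $q$ at level $j$ has fewer than $\tfrac12(2A_0+1)^{n-1}$ bad children; this is where the simplex lemma and the hyperplane lemma are applied, at the fine scale $q$ rather than at the scale of $Q$. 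Since good cubes only have good children, the count of bad cubes satisfies $K_{j+1}\le \tfrac12(2A_0+1)^{n-1}K_j$, and iterating from level $j_0$ (where $K_{j_0}$ may be as large as $(2A_0+1)^{nj_0}$) for enough further steps drives the bad fraction below $\tfrac12$ of a face's worth of subcubes; the final $A$ is then $(2A_0+1)^j$ for a suitably large $j$. So the missing idea is precisely this geometric-decay cascade: a single application of the dichotomy at the top scale cannot work, and the theorem is really obtained by compounding many applications of Lemma \ref{ldi1} at successively finer scales.
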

\begin{proof}
 Let us fix a small $\varepsilon>0$, which will be specified later, and choose $A_1=A_1(\varepsilon,n)$ such that Corollary \ref{lp2} holds for this $\varepsilon$ and $A_1=2A_0+1$  as well as the remark after Corollary \ref{lp2}.  Let us subsequently divide $Q$ into equal subcubes so that at $j$-th division step $Q$ is partitioned  into $(2A_0+1)^{nj}$ equal subcubes $Q_{i_1,i_2, \dots, i_j}$, $i_1,i_2\dots, i_j \in \{1,2,\dots, (2A_0+1)^n\}$, so that $Q_{i_1,i_2, \dots, i_j} \subset Q_{i_1,i_2, \dots, i_{j-1}}$.
Let the parameter $c>0$. We will say that the  cube $Q_{i_1,i_2, \dots, i_j}$ is bad if $N(Q_{i_1,i_2, \dots, i_j})> N(Q)/(1+c)$ and good otherwise. 
  
   Fix a cube $Q_{i_1,i_2, \dots, i_j}=:q$, we are interested in the number of its bad subcubes $Q_{i_1,i_2, \dots, i_{j+1}}=:q_{i_{j+1}}$. For the sake of convenience we will omit index $j+1$ and write $q_{i}$ in place of $q_{i_{j+1}}$.  We will prove the following lemma.
\begin{lemma}  \label{ldi1}
If $\varepsilon,c$ are sufficiently small,  and $j>j_0$, where $j_0=j_0(\varepsilon,c)$,  then $\# \{i: N(q_i)> N(Q)/(1+c)\} \leq \frac{1}{2} (2A_0+1)^{n-1}$ 
\end{lemma}
 
Let $F$ be the set of all points $x$ in $q$ such that there exists $r\in (0,diam(q_i)]$ such that $N(x,r) > N(Q)/(1+c)$. 
 If a closed cube $q_i$ is bad, then it contains at least one point from $F$.
 We use the notation $\tilde w(F):= \frac{width(F)}{diam(q)}$ for the relative width of $F$ in $q$. 
 We will prove  Lemma \ref{ldi1} after the following lemma.
 
 \begin{lemma} \label{lw}
For any $w_0>0$ there exist a positive integer $j_0$  and a constant $c_0>0$ such that if 
 $j>j_0$, $c<c_0$, then $\tilde w(F)< w_0$.
\end{lemma}
To prove this lemma we need an Euclidean geometry fact: {\it for any set of points $F$ in $q$ with non-zero $\tilde w(F)$  there exists 
 $a=a(\tilde w(F),n)>0$ and a simplex  $S \subset F$ such that $ w(S)> a$ and $diam(S) > a\cdot diam(q)$}. 

 For each vertex $x_k$ of $S$ there is a ball $B(x_k,r_k)$ with $N(x_k,r_k) \geq N/(1+c)$ and $r_k \leq diam(q) \leq \frac{1}{a} diam(S)$.
 We can apply Lemma \ref{sl} for the simplex $S$. Then $N(x_0,C_0 diam(S))> (1+c_0)N/(1+c)$, where $x_0$ is a barycenter of $S$ and $c_0$,$C_0$ are positive constants  depending on $a$ (and $n$) only and therefore on $\tilde w(F)$ only (and $n$). If $c_0> c$ and $C_0 diam(S) \leq diam(Q)$ that means a contradiction with $N(Q) \leq N$. This is why we require $j$ to be big enough:
$diam(S) \leq diam(q) \leq \frac{diam(Q)}{(2A_0+1)^{j}}\leq diam(Q)/3^{j}$ and it is sufficient to take $j$ such that $3^{j}>C_0$.

 Now,  Lemma \ref{lw}  is proved and we can think that $\tilde w(F)$ is smaller than a fixed number $w_0=\frac{1}{2A_0+1}$ and proceed to prove Lemma \ref{ldi1}.
 There exists a hyperplane $P$ such that its $w_0\cdot  diam(q)$ neighborhood  contains all $F$.
 Furthermore, we  can find a biggier cube $\tilde q$ with one face parallel to $P$ such that the center of $\tilde q $ is  in $ P\cap q $ and
 $diam(\tilde q) =10 \sqrt n \cdot diam(q)$. Automatically $\tilde q$ contains $q$. Divide $\tilde q$ into $(2A_0+1)^n$ equal subcubes $\tilde q_i$. We will denote by $\tilde q_{i,0}$ such subcubes that have non-zero intersection with $P$.  Since $w_0\leq \frac{1}{2A_0+1}$, each bad cube $q_i$ is contained in a $\frac{2\sqrt n \cdot diam(q)}{2A_0+1}$ neighborhood of $P$ and  each bad cube $q_i$ is covered by a finite number (which depends on $n$ only) of $\tilde q_{i,0}$. Therefore the number of bad cubes $q_i$ is less than the number of bad cubes $\tilde q_{i,0}$ times some  constant depending on dimension $n$ only.
 
  Now, assume the contrary to Lemma \ref{ldi1}. Suppose that the number of bad $q_i$ is greater than $\frac{1}{2} (2A_0+1)^{n-1}$, then the number of bad cubes $\tilde q_{i,0}$ is at least  $\frac{1}{C} (2A_0+1)^{n-1}$, where $C=C(n)>0$.

Finally, we choose $\varepsilon$, which didn't play a role till now: $\varepsilon$ is any number in $(0,\frac{1}{2C})$. Recall that $A_0=A_0(\varepsilon)$  is such that Corollary \ref{lp2}  holds for $A_1=2A_0+1$ and this $\varepsilon$ as well as the remark after Corollary \ref{lp2}. Since the number of bad $\tilde q_{i,0}$ is greater than $\varepsilon(2A_0+1)^{n-1}$ we have $N(\tilde q) \geq 2N/(1+c)$. Without loss of generality we assume that $c<1/10$, then
 there exists a point $\tilde p\in \tilde q$ such that $N(\tilde p,diam (\tilde q)) \geq \frac{3}{2} N $. The last observation looks to be inconsistent with $N(Q) \leq N$, however $\tilde q$ is not necessarily contained in $Q$ and the contradiction is not immediate. 
 This obstacle is easy to overcome. Consider any point $p \in q \subset Q$. There exists a large $C_1=C_1(n)$ such that $N(p, C_1 diam(\tilde q) ) \geq (1-1/100) N(\tilde p, diam(\tilde q))$ (see Lemma \ref{l6}). 
Thus there is a contradiction with $N(Q) \leq N$ since $N(p, C_1 diam(\tilde q)) > N$ and  $C_1 diam(\tilde q) \leq  diam(Q)$ if $j$ is big enough.  The proof of Lemma \ref{ldi1} is completed. Now, it is a straightforward matter to prove Theorem \ref{th1/2}.

 Denote by $K_j$ the number of bad cubes on $j$-th step.  If $Q_{i_1,i_2, \dots, i_j}$ is good, then any its subcube is also good by the definition of doubling index for cubes. If $Q_{i_1,i_2, \dots, i_j}=:q$ is bad, then by  Lemma \ref{ldi1} the number of bad subscubes $Q_{i_1,i_2, \dots, i_{j+1}}$ in $q$ is less than $\frac{1}{2}(2A_0+1)^{n-1}$. Hence  $K_{j+1} \leq \frac{1}{2}(2A_0+1)^{n-1}K_j$ for $j>j_0$. We can define $A=(2A_0+1)^{j}$ and see that $K_j \leq  K_{j_0} \frac{1}{2^{j-j_0}}(2A_0+1)^{(n-1)(j-j_0)} \leq \frac{1}{2} A^{n-1}  $ for $j$ big enough.

\end{proof}

\section{Upper estimates of the volume of the nodal set.}

\begin{theorem} \label{thv}
 There exist positive numbers $r=r(M,g,O)$, $C=C(M,g,O)$ and $\alpha=\alpha(n)$ such that for any harmonic function $u$ on $M$ and any cube $Q\subset B(O,r)$

\begin{equation} \label{thv1}
 H^{n-1}(\{u=0\}\cap Q) \leq C diam^{n-1}(Q) N_u^\alpha(Q),
\end{equation}
 where $N_u(Q)$ is the doubling index of $Q$ for the function $u$.
\end{theorem}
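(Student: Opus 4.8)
The plan is to turn Theorem \ref{th1/2} into a self-improving recursion for the nodal measure, normalized by diameter. For a harmonic function $u$ and a cube $Q\subset B(O,r)$ put $\Phi(Q):=H^{n-1}(\{u=0\}\cap Q)/diam^{n-1}(Q)$, and let $F(N)$ be the supremum of $\Phi(Q)$ over all harmonic $u$ and all cubes $Q\subset B(O,r)$ with $N_u(Q)\le N$. I would take $r=r(M,g,O)$ no larger than the radius furnished by Theorem \ref{th1/2} (and small enough for the elliptic estimates of Section \ref{sec:di}); the key structural point is that every subcube of an admissible $Q$ is again admissible, so the iteration never leaves the regime in which Theorem \ref{th1/2} applies. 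Two external inputs are needed to get started: (i) an \emph{a priori} finiteness bound, e.g. $H^{n-1}(\{u=0\}\cap Q)\le e^{C(M)N_u(Q)}diam^{n-1}(Q)$ of Donnelly--Fefferman / Hardt--Simon type, which in particular makes $F(N)<\infty$ for every $N$; and (ii) its special case $F(N_0)\le C_0(M,g,O)$, where $N_0$ is the absolute constant from Theorem \ref{th1/2}.

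The recursive step is the heart of the matter. Fix $Q$ with $N:=N_u(Q)$, partition it into $A^n$ equal subcubes $q_1,\dots,q_{A^n}$ with $A=A(n)$ as in Theorem \ref{th1/2}, so $diam(q_i)=diam(Q)/A$. Call $q_i$ \emph{bad} if $N_u(q_i)>\max(N/(1+c),N_0)$ and \emph{good} otherwise. By Theorem \ref{th1/2} there are fewer than $\tfrac12 A^{n-1}$ bad subcubes; good ones obey $N_u(q_i)\le\max(N/(1+c),N_0)$, and every $q_i\subset Q$ trivially obeys $N_u(q_i)\le N$. Using subadditivity of $H^{n-1}$ over the tiling $\{q_i\}$ and dividing by $diam^{n-1}(Q)=A^{n-1}diam^{n-1}(q_i)$,
\[
\Phi(Q)\le\frac1{A^{n-1}}\sum_{i}\Phi(q_i)\le A\,F\!\big(\max(N/(1+c),N_0)\big)+\tfrac12 F(N),
\]
because at most $A^n$ cubes are good and fewer than $\tfrac12 A^{n-1}$ are bad. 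Taking the supremum over admissible pairs $(u,Q)$ with $N_u(Q)\le N$ and absorbing the (finite!) term $\tfrac12 F(N)$ yields
\[
F(N)\le 2A\,F\!\big(\max(N/(1+c),N_0)\big).
\]

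Unwinding this finishes the proof. For $N>N_0$ the inequality reads $F(N)\le 2A\,F(N/(1+c))$; iterating it until the argument drops below $N_0$, which takes $O(\log(N/N_0)/\log(1+c))$ steps, produces $F(N)\le C\,N^{\alpha}$ with $\alpha:=\log(2A)/\log(1+c)$ and $C=C(M,g,O,n)$, both finite. Since $2A$ is a (large) integer while $1+c>1$ is close to $1$, one has $\alpha>1$, in agreement with the introduction. For the remaining range $N_u(Q)\le N_0$ the base case (ii) already gives \eqref{thv1} after enlarging $C$. Combining the two ranges is exactly \eqref{thv1}.

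The parts I expect to be routine are the subadditivity bookkeeping and the geometric-series unwinding. The one point genuinely worth care is input (i): the absorption of $\tfrac12 F(N)$ is legitimate only once $F$ is known to be finite, so a previously established (even merely exponential-in-$N$) estimate on the nodal volume is exactly what allows the recursion to be started and then bootstrapped down to a polynomial bound. Obtaining the polynomial growth rate $\alpha(n)$ in closed form is then immediate from $A$ and $c$, which are dimensional constants coming out of Theorem \ref{th1/2} (hence ultimately from the simplex and hyperplane lemmas).
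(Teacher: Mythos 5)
Your argument is correct and follows essentially the same route as the paper: same definition of $F(N)$ and the same tiling-plus-subadditivity step driven by Theorem \ref{th1/2}, with $F$'s a priori finiteness (Hardt--Simon) used both to absorb the $\tfrac12 F(N)$ term and to furnish the base case. The only cosmetic difference is that the paper phrases the self-improving step as ``no bad $N>N_0$,'' reaching a contradiction at the factor $3/4$, while you write the equivalent recursion $F(N)\le 2A\,F(\max(N/(1+c),N_0))$ directly and unwind it.
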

\begin{proof}
 Choose $r$ so that Theorem \ref{th1/2} holds with this $r$ and some $c=c(n)$, $A=A(n)$.
Now, define the function 
$$F(N):= \sup  \frac{H^{n-1}(\{u=0\}\cap Q)}{ diam^{n-1}(Q)},$$ 
where the supremum  is taken over the set of harmonic functions $u$ on $M$, which we denote by $\textup{Harm}(M)$, and cubes $Q$ within  $B(O,r)$ such that $N_u(Q) \leq N$.
 The estimate \eqref{thv1} is equivalent to 
\begin{equation} \label{thv2}
 F(N) \leq C N^{\alpha}.
\end{equation} 
 We note that if $u$ changes a sign in $Q$, then $N_u(Q) \geq 1$, since $\lim\limits_{t \to +0} N(x,t)$ is equal to the vanishing order of $u$ at $x$.
  Due to the Hardt-Simon exponential bounds we know $F(N)<+\infty$ for each positive $N$. 
 
 We will call $N>0$ bad if 
\begin{equation} \label{thv3}
F(N) > 4A \cdot F(N/(1+c)) .
\end{equation}
 Our goal is to show that the set of bad $N$ is bounded.  In view of monotonicity of $F$ it  would imply  \eqref{thv2} immediately, where the constant $\alpha$ depends  on $A$ and $c$ only and therefore only on the dimension $n$.

 Consider a bad $N$ and a function $u$ with a cube $Q$ such that $F(N)$ is almost attained for them: 
\begin{equation} \label{thv4}
\frac{H^{n-1}(\{u=0\}\cap Q)}{ diam^{n-1}(Q)} > \frac{3}{4} F(N)
\end{equation}
while $N_u(Q) \leq N$. Divide $Q$ into $A^{n}$ equal subcubes $Q_i$, $i=1, \dots, A^n$. Divide $Q_i$ into two groups  $G_1:=\{Q_i: N/(1+c)< N(Q_i) \leq N\}$ and $G_2:=\{Q_i: N(Q_i)\leq N/(1+c)\}$. By Theorem \ref{th1/2} we know that the number of cubes in $G_1$ satisfies $|G_1| \leq  \frac{1}{2} A^{n-1}$
if $N>N_0(M,g)$. 
Note that  $$H^{n-1}(\{u=0\}\cap Q) \leq \sum\limits_{Q_i \in G_1} H^{n-1}(\{u=0\}\cap Q_i) +  \sum\limits_{Q_i \in G_2} H^{n-1}(\{u=0\}\cap Q_i) $$
$$\leq |G_1| F(N) \frac{diam^{n-1}(Q)}{A^{n-1}} + |G_2| F(N/(1+c)) \frac{diam^{n-1}(Q)}{A^{n-1}} = I +II.$$
Since $|G_1| \leq  \frac{1}{2} A^{n-1}$ we can estimate $I \leq \frac{1}{2} F(N) diam^{n-1}(Q)$. Using that $N$ is bad, we have $II \leq |G_2| \frac{F(N)}{4A} \frac{diam^{n-1}(Q)}{A^{n-1}}  $ and $|G_2| \leq A^n$, hence $II \leq \frac{1}{4} F(N) diam^{n-1}(Q) $. Finally, $ H^{n-1}(\{u=0\}\cap Q) \leq \frac{3}{4} F(N) diam^{n-1}(Q)$ and the last inequality contradicts to \eqref{thv4}. Thus we had shown that the set of bad $N$ is bounded by some $N_0=N_0(M,g)$.
\end{proof}

\begin{theorem}
 Let $(W,g)$ be a compact $C^{\infty}$-smooth Riemannian manifold without boundary. For a Laplace eigenfunction $\varphi$ on $W$ with $\Delta \varphi + \lambda \varphi=0$ define its nodal set $Z_\varphi:=\{ \varphi =0 \}$. There exist $C=C(W,g)$ and $\alpha$, depending only on the dimension $n$ of $W$, such that 

 $$ H^{n-1}(Z_\varphi) \leq C \lambda^{\alpha}.$$
\end{theorem}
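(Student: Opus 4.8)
The plan is to deduce the eigenfunction bound from Theorem~\ref{thv} via the standard harmonic-extension trick already recalled in Section~\ref{sec:di}. Given a Laplace eigenfunction $\varphi$ on $(W,g)$ with $\Delta\varphi+\lambda\varphi=0$, set $u(x,t)=\varphi(x)e^{\sqrt\lambda\,t}$ on the product manifold $\widetilde W=W\times\R$ with the product metric; then $u$ is harmonic on $\widetilde W$, and $Z_\varphi\times\R=\{u=0\}$, so $H^{n-1}(Z_\varphi)$ is controlled (up to a fixed factor depending on the slice $[0,1]$) by $H^{n}(\{u=0\}\cap (W\times[0,1]))$. The manifold $\widetilde W$ is non-compact but we only ever work inside a fixed bounded region $W\times[-2,2]$, which is exactly the setting of Theorem~\ref{thv}.

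Next I would cover $W\times[0,1]$ by a bounded number (depending on $W,g$ only) of coordinate cubes $Q_j\subset B(O_j,r)$ of fixed size $r=r(\widetilde W)$, apply \eqref{thv1} to each, and sum using subadditivity of Hausdorff measure:
\begin{equation} \label{eq:final1}
H^{n}(\{u=0\}\cap (W\times[0,1])) \le C\sum_j diam^{\,n}(Q_j)\, N_u^{\alpha}(Q_j) \le C'\max_j N_u(Q_j)^{\alpha}.
\end{equation}
So everything reduces to bounding the doubling index $N_u(Q_j)$ of the harmonic extension by $C\sqrt\lambda$. This is the classical estimate relating the frequency/doubling index of $u$ to $\sqrt\lambda$: since $u(x,t)=\varphi(x)e^{\sqrt\lambda t}$, a direct computation of $\sup_{2B}|u|/\sup_B|u|$ (or of the frequency $\beta_u$) on balls of fixed radius shows it grows like $e^{C\sqrt\lambda}$, i.e. $N_u(Q_j)\le C(W,g)\sqrt\lambda + C$. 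Here one uses the well-known global bound $\sup_W|\varphi|\le e^{C\sqrt\lambda}\|\varphi\|_{L^2}$ (and $\|\varphi\|_{L^2}$ normalized), together with almost-monotonicity \eqref{b1} and the elliptic estimates \eqref{h1}, \eqref{h2} to pass between the $L^2$ and $L^\infty$ formulations; alternatively, the standard Donnelly--Fefferman/Garofalo--Lin doubling inequality $\sup_{B(x,2r)}|\varphi|\le e^{C\sqrt\lambda}\sup_{B(x,r)}|\varphi|$ for $\varphi$ itself transfers verbatim to $u$ because the extra exponential factor $e^{\sqrt\lambda t}$ contributes at most $e^{C\sqrt\lambda}$ on a ball of bounded radius.

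Combining \eqref{eq:final1} with $N_u(Q_j)\le C\sqrt\lambda$ gives $H^{n}(\{u=0\}\cap(W\times[0,1]))\le C(W,g)\,\lambda^{\alpha/2}$, hence $H^{n-1}(Z_\varphi)\le C(W,g)\,\lambda^{\alpha/2}$ with $\alpha/2$ depending only on $n$ (recall $\dim\widetilde W=n+1$, so the relevant exponent is $\alpha(n+1)$); renaming the exponent yields the stated inequality, and $\alpha>1/2$ follows from $\alpha(n+1)>1$ in Theorem~\ref{thv}. The only genuinely non-routine point is the doubling-index bound $N_u\le C\sqrt\lambda$; everything else is a bounded covering plus the scaling/subadditivity of Hausdorff measure. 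That bound is itself classical, but I would write it out carefully since it is where the eigenvalue $\lambda$ enters, checking that the constants in \eqref{h1}--\eqref{h2} and \eqref{b1} applied on $\widetilde W$ are uniform over the finitely many charts and independent of $\varphi$.
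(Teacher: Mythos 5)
Your proposal is correct and follows essentially the same route as the paper: the same harmonic extension $u(x,t)=\varphi(x)e^{\sqrt\lambda t}$ on $W\times\mathbb{R}$, the same appeal to the Donnelly--Fefferman doubling estimate to get $N_u\le C\sqrt\lambda$, then Theorem~\ref{thv} locally plus a finite cover of the compact manifold. The only cosmetic difference is that you cover $W\times[0,1]$ by cubes directly, where the paper first derives a local ball estimate and then covers $W$ by balls; both also correctly track that the exponent $\alpha$ in the final bound comes from Theorem~\ref{thv} applied in dimension $n+1$ divided by $2$.
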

 \begin{proof}
 We will use a standard trick that allows to pass  from Laplace eigenfunctions to harmonic functions by adding an extra variable.
 Consider a product manifold $ M = W \times \mathbb{R}$, where one can define a harmonic function $u$ by
$$u(x,t)= \varphi(x) e^{\sqrt \lambda \cdot t}, x\in W, t \in \mathbb{R}.$$   
 
 The Donnelly-Fefferman doubling index estimate for Laplace eigenfunctions claims 
 $$\sup \limits_{B_g(p,2r)} |\varphi| \leq 2^{C \sqrt \lambda}  \sup \limits_{B_g(p,r)} |\varphi|,$$
where $C=C(M,g)$, $p$ is any point on $W$ and $r \in (0, R_0(M,g))$.
It implies that the doubling index of $u$ is also bounded by $C_1 \sqrt \lambda$ in balls with radius smaller than some $R_1=R_1(W,g)$.
 Let us fix a point $O \in M$ and a point $\tilde O=(O,0) \in  M$.
  We can apply Theorem \ref{thv}  to see that $H^{n}(\{u=0\} \cap B(\tilde O, r) ) \leq C_2 \lambda^{\alpha}$ for some $r=r(W,g)>0$. 

  It remains to note that $H^{n}(\{u=0\} \cap B(\tilde O, r) ) \leq C_2 \lambda^{\alpha}$ implies $H^{n-1}(\{\varphi =0\} \cap B_g(O, r/2) ) \leq  C_3 \lambda^{\alpha}$ since the zero set of $u$ is exactly $Z_\varphi \times \mathbb{R}$.
 Finally, one can cover $M$ by finite number of such  balls  and obtain the desired global estimate of the volume of the nodal set. 
 \end{proof}

 \textbf{Remark.}
  The same argument gives a local volume estimate of the nodal set:
 $$H^{n-1}(\{\varphi =0\}\cap B_g(O,r)) \leq C r^{n-1} \lambda^{\alpha}.$$

\section{Auxiliary lemmas }
\begin{lemma} If $\varepsilon_1>0$ is a sufficiently small number ($\varepsilon_1 < 1/10^{10}$), then there exist $C= C(\varepsilon_1, M,g,O)>0$ and $R_1= R_1(\varepsilon_1, M,g,O)>0$ such that
\begin{equation} \label{b2}
\beta(p,2r(1+\varepsilon_1))(1+ 100 \varepsilon_1 ) + C \geq N(p,r) \geq  \beta(p,r(1+\varepsilon_1))(1- 100 \varepsilon_1) - C
\end{equation} 
for $r \in (0,R_1)$ and $p \in B(O,R_1)$.
\end{lemma}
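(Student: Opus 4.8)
The plan is to compare the doubling index $N(p,r)$ (an $L^\infty$, Euclidean-ball quantity) with the frequency $\beta(p,r)$ (an $L^2$, geodesic-ball quantity) by sandwiching each between suitable values of the other, using the elliptic estimates \eqref{h1}, \eqref{h2} relating $\sup|u|$ to $H(\cdot)$, together with the growth estimates for $H$ from Corollary \ref{logc}. Since \eqref{eqm} makes Euclidean and geodesic balls comparable up to a factor $(1\pm\varepsilon)$, I will freely pass between $B(p,r)$ and $B_g(p,r')$ for $r'$ a slightly perturbed radius, absorbing all such distortions into the factor $(1+100\varepsilon_1)$ and the additive constant $C$.

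First I would fix $\varepsilon_1$ small and set $\varepsilon = \varepsilon_1$ (or a small multiple) for use in Lemma \ref{ln3}, Corollary \ref{logc}, \eqref{h1}, \eqref{h2}, \eqref{eqm}; choose $R_1$ smaller than all the $R_0$'s those results demand. To prove the upper bound $N(p,r) \le \beta(p,2r(1+\varepsilon_1))(1+100\varepsilon_1)+C$: write $2^{N(p,r)} = \sup_{B(p,2r)}|u| / \sup_{B(p,r)}|u|$. Bound the numerator from above by $\left(C_1 H(\rho_+)/\rho_+^{n-1}\right)^{1/2}$ with $\rho_+$ a geodesic radius slightly larger than $2r$ via \eqref{h1}, and bound the denominator from below by $\left(H(\rho_-)/(C_2\rho_-^{n-1})\right)^{1/2}$ with $\rho_-$ a geodesic radius slightly smaller than $r$ via \eqref{h2}. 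The ratio $H(\rho_+)/H(\rho_-)$ is then controlled from above by $(\rho_+/\rho_-)^{2\beta(\rho_+)(1+\varepsilon)}$ from Corollary \ref{logc}, while the geometric factor $(\rho_-/\rho_+)^{n-1}$ and the constants $C_1, C_2$ contribute only to the additive constant $C$ after taking $\log_2$ (since $\rho_+/\rho_-$ is a bounded ratio close to $2$). Arranging $\rho_+ \le 2r(1+\varepsilon_1)$ and applying almost-monotonicity \eqref{b1} to replace $\beta(\rho_+)$ by $\beta(p,2r(1+\varepsilon_1))$ yields the claimed inequality. The lower bound $N(p,r) \ge \beta(p,r(1+\varepsilon_1))(1-100\varepsilon_1)-C$ is the mirror image: bound $\sup_{B(p,2r)}|u|$ from below by an $H$-value via \eqref{h2} and $\sup_{B(p,r)}|u|$ from above via \eqref{h1}, then use the lower bound in Corollary \ref{logc} for $H(\rho_+')/H(\rho_-')$ and almost-monotonicity again.

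The main obstacle, or rather the main bookkeeping burden, is tracking how the various small perturbations compound: each conversion (Euclidean $\leftrightarrow$ geodesic radius via \eqref{eqm}, the extra $(1+\varepsilon)$ inside \eqref{h1}, the $(1+\varepsilon)$ exponent distortion in Corollary \ref{logc}, and the almost-monotonicity factor \eqref{b1}) multiplies $\beta$ by something like $(1+\varepsilon)$, and one must check these finitely many factors multiply out to at most $(1+100\varepsilon_1)$ for $\varepsilon_1$ small, and that all the genuinely bounded constants ($C_1$, $C_2$, the $(n-1)\log_2(\rho_+/\rho_-)$ term, and $\log_2$ of the ratio of radii, which stays near $1$) collapse into a single additive $C$ independent of $u$. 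A minor point to dispatch at the start: \eqref{h2} requires only continuity of $u$, and \eqref{h1} requires harmonicity, both of which hold; and one should note $H(r)>0$ whenever $u\not\equiv0$ near $p$, so all the logarithms and ratios are well defined (the degenerate case $u\equiv 0$ makes both sides vacuous or trivial).
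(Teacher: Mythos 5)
Your proposal is correct and follows essentially the same route as the paper: express $N(p,r)$ as $\log_2$ of a ratio of sup-norms over Euclidean balls, convert to geodesic spheres using \eqref{eqm} and the max principle, apply the elliptic estimates \eqref{h1} and \eqref{h2} (upper bound for the numerator via \eqref{h1}, lower bound for the denominator via \eqref{h2}, and the reverse for the other inequality), and then use Corollary \ref{logc} to control the resulting ratio of $H$-values by $\beta$. The only cosmetic difference is that the paper arranges the perturbed radii so that the $\beta$-argument lands exactly at $2r(1+\varepsilon_1)$ (respectively $r(1+\varepsilon_1)$) by setting $(1+\varepsilon)^2 = 1+\varepsilon_1$, rather than invoking an extra almost-monotonicity step as you suggest; both variants work, with the same multiplicative and additive losses.
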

  We remark that it is not a misprint and the argument in $\beta$  in the right-hand side of \eqref{b2} is strictly greater than $r$.
\begin{proof}
 By the equivalence of metrics \eqref{eqm} we have
$B(p,r) \subset B_g(p,r(1+\varepsilon)) $ and by the standard elliptic estimate  $$\sup\limits_{B(p,r)} |u|^2 \leq\sup\limits_{B_g(p,r(1+\varepsilon))} |u|^2 \leq C_1 H(r(1+\varepsilon)^2)/ r^{n-1}$$ and $$\sup\limits_{B(p,2r)} |u|^2 \geq \sup\limits_{B_g(p,2r(1-\varepsilon))} |u|^2 \geq C_2 H(2r(1-\varepsilon))/r^{n-1}.$$ 
Hence  we can estimate $$N(p,r) = \frac{1}{2}\log_2 \frac{\sup\limits_{B(p,2r)} |u|^2}{\sup\limits_{B(p,r)} |u|^2} \geq \frac{1}{2}\log_2 \frac{1}{C_3} \frac{H(2r(1-\varepsilon))}{ H(r(1+\varepsilon)^2)}, $$ and by  Corollary \ref{logc} the right-hand side is at least $$ \log_2 \left[ \frac{1}{C_3} \left(\frac{2(1-\varepsilon)}{(1+\varepsilon)^2}\right)^{\beta(r(1+\varepsilon)^2) / (1+\varepsilon)} \right] \geq  \beta(r(1+\varepsilon)^2)(1- 20 \varepsilon) - C_4.$$  We assumed above that $\varepsilon$ is sufficiently small.
 Now, we can let $\varepsilon_1 $ be such that $(1+\varepsilon)^2 = 1+\varepsilon_1 $, so $\varepsilon_1 \sim 2 \varepsilon$, and the right-hand side inequality of \eqref{b2} is obtained.

 To obtain the opposite estimate we argue in the same manner: 
$$\sup\limits_{B(p,r)} |u|^2 \geq\sup\limits_{B_g(p,r(1-\varepsilon))} |u|^2 \geq C_2H(r(1-\varepsilon))/ r^{n-1},$$ 
 and  $$\sup\limits_{B(p,2r)} |u|^2 \leq \sup\limits_{B_g(p,2r(1+\varepsilon))} |u|^2 \leq C_3 H(2r(1+\varepsilon)^2)/r^{n-1}.$$
 Applying these estimates we have
$$N(p,r) = \log_2 \frac{\sup\limits_{B(p,2r)} |u|}{\sup\limits_{B(p,r)} |u|} \leq \frac{1}{2} \log_2 C_4 \frac{H(2r(1+\varepsilon)^2)}{H(r(1-\varepsilon))} .$$ In a view of \eqref{logc}, the right hand side can be estimated from above by $$\beta(2r(1+\varepsilon)^2)(1+ 20 \varepsilon ) + C_5 \leq \beta(p,2r(1+\varepsilon_1))(1+ 100 \varepsilon_1 ) + C_5, $$  where  $\varepsilon_1 $ satisfies $(1+\varepsilon)^2 = 1+\varepsilon_1 $.
\end{proof}
\begin{lemma} \label{l4}
 Let $\varepsilon$ be a small positive number. Then there exists $R= R(\varepsilon,M,g,O)$ such that for any $x \in B(O,R)$ and for any numbers $t>2$ and $\rho>0$ such that $t \rho < R$, 
\begin{equation} \label{n1}
  \sup\limits_{B(x,t\rho)} |u| \geq  t^{N(x,\rho)(1-\varepsilon) - C(\varepsilon,M,g)} \sup\limits_{B(x,\rho)} |u|.
\end{equation}
Furthermore, there exists $N_0=N_0(\varepsilon,M,g)$ such that  if $N(x,\rho) > N_0$, then   additionally
\begin{equation} \label{n1*}
  \sup\limits_{B(x,t\rho)} |u| \geq  t^{N(x,\rho)(1-\varepsilon) } \sup\limits_{B(x,\rho)} |u|.
\end{equation}
\end{lemma}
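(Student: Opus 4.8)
The plan is to first establish a single-scale growth estimate by converting sup-norms on Euclidean balls into the spherical integrals $H(r)$, and then to sharpen it to \eqref{n1} by pairing it with the \emph{exact} doubling identity $\sup_{B(x,2\rho)}|u|=2^{N(x,\rho)}\sup_{B(x,\rho)}|u|$.

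First I would fix an auxiliary parameter $\delta>0$, to be taken much smaller than $\varepsilon$, and choose $R$ small enough that \eqref{eqm}, \eqref{h1}, \eqref{h2}, \eqref{b1} and \eqref{b2} all hold with $\delta$ in a fixed neighbourhood of $B_g(O,R)$. For a center $x\in B(O,R)$, a base radius $s$ and a dilation $t'\ge 1+\varepsilon/2$ with $t's<R$, I would squeeze the two sup-norms between values of $H$: using $B(x,s)\subset B_g(x,(1+\delta)s)$, the maximum principle for harmonic $u$, and \eqref{h1},
\[
\sup_{B(x,s)}|u|^2\le C_1\,((1+\delta)s)^{-(n-1)}\,H((1+\delta)^2 s),
\]
while, using $\partial B_g(x,(1-\delta)t's)\subset\overline{B(x,t's)}$ together with \eqref{h2},
\[
\sup_{B(x,t's)}|u|^2\ge C_2^{-1}\,((1-\delta)t's)^{-(n-1)}\,H((1-\delta)t's).
\]
Dividing these, inserting the integral formula $H(r_2)/H(r_1)=\exp(2\int_{r_1}^{r_2}\beta\,d\log r)$ on $[(1+\delta)^2 s,(1-\delta)t's]$, bounding $\beta(r)\ge\beta((1+\delta)^2 s)/(1+\delta)$ there by \eqref{b1}, and finally invoking \eqref{b2} at radius $s/2$ — so that, writing $(1+\delta)^2=1+\delta_1$, the argument $2\cdot\tfrac s2(1+\delta_1)$ of $\beta$ equals $(1+\delta)^2 s$ — I would obtain
\[
\sup_{B(x,t's)}|u|\ge (t')^{\,N(x,s/2)(1-c\delta/\varepsilon)-C}\,\sup_{B(x,s)}|u|,
\]
the polynomial prefactors being absorbed into the additive constant $C$ since $t'\ge 1+\varepsilon/2$ keeps $\log t'$ bounded away from $0$; choosing $\delta$ with $c\delta/\varepsilon\le\varepsilon$ makes the loss at most $\varepsilon$.

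To deduce \eqref{n1} for a given $t>2$ and $\rho$ with $t\rho<R$, I would split into two cases. If $t\le 2^{1/(1-\varepsilon)}$ (so $t$ lies within $O(\varepsilon)$ of $2$), then $t^{N(x,\rho)(1-\varepsilon)-C}\le 2^{N(x,\rho)}$ and \eqref{n1} is immediate from $\sup_{B(x,t\rho)}|u|\ge\sup_{B(x,2\rho)}|u|=2^{N(x,\rho)}\sup_{B(x,\rho)}|u|$. If $t>2^{1/(1-\varepsilon)}$, I would apply the single-scale estimate with base radius $s=2\rho$ and dilation $t'=t/2>1+\varepsilon/2$ — here $s/2=\rho$, so the exponent already carries the desired $N(x,\rho)$ rather than $N$ at a smaller scale — getting $\sup_{B(x,t\rho)}|u|\ge (t/2)^{N(x,\rho)(1-\varepsilon)-C}\sup_{B(x,2\rho)}|u|$, and then multiply by $\sup_{B(x,2\rho)}|u|=2^{N(x,\rho)}\sup_{B(x,\rho)}|u|$; since $(t/2)^{N(x,\rho)(1-\varepsilon)-C}\cdot 2^{N(x,\rho)}=t^{N(x,\rho)(1-\varepsilon)-C}\cdot 2^{\,\varepsilon N(x,\rho)+C}\ge t^{N(x,\rho)(1-\varepsilon)-C}$, inequality \eqref{n1} follows. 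For \eqref{n1*} I would run the same argument with $\varepsilon/2$ in place of $\varepsilon$ and then take $N_0=N_0(\varepsilon,M,g)$ so large that $C/N_0<\varepsilon/2$, which lets the additive constant in the exponent be absorbed into a factor $(1-\varepsilon)$ once $N(x,\rho)>N_0$.

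The main obstacle will be precisely the gap between the single-scale estimate and the sharp conclusion: because of the metric distortion and the fact (noted in the remark after \eqref{b2}) that the argument of $\beta$ in \eqref{b2} sits at radius $2r$ rather than $r$, the single-scale step only controls growth in terms of $N$ at \emph{half} the base radius. Repairing the scale forces the factorization through the exact doubling identity, and hence also forces the separate treatment of $t$ near $2$ via the trivial inclusion bound, as well as the choice of the internal distortion $\delta$ genuinely smaller than $\varepsilon^2$ (so that the $c\delta/\log t'$ loss, with $t'\asymp 1+\varepsilon$ in the decisive case, stays below $\varepsilon$).
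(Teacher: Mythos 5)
Your argument is correct and follows essentially the same route as the paper: after disposing of $t$ near $2$ via the exact doubling identity, both proofs reduce $\sup$-norms to the spherical $L^2$-quantity $H$ via \eqref{h1}--\eqref{h2}, run the integral formula for $H$ with the lower bound on $\beta$ supplied by the left-hand inequality of \eqref{b2}, and use the identity $\sup_{B(x,2\rho)}|u|=2^{N(x,\rho)}\sup_{B(x,\rho)}|u|$ to cancel the factor of $2$ in the base; your factoring through a separate single-scale estimate with $s=2\rho$, $t'=t/2$ is a cosmetic repackaging of the paper's \eqref{eq:l41}--\eqref{eq:l43}.
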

\begin{proof}
 We can assume that $t> 2^{1+\varepsilon}$, otherwise $t^{N(x,\rho)(1-\varepsilon)} \leq 2^{N(x,\rho)}$ and $$\sup\limits_{B(x,t\rho)} |u| \geq \sup\limits_{B(x,2\rho)} |u| \geq 2^{N(x,\rho)}  \sup\limits_{B(x,\rho)} |u| \geq  t^{N(x,\rho)(1-\varepsilon) } \sup\limits_{B(x,\rho)} |u|.$$

 Hereafter the constants $C_1$, $C_2$, \dots will be positive numbers depending on $\varepsilon$, $M$, $g$ only. 
 The inequality \eqref{h2} says that 
\begin{equation} \label{eq:l41}
\sup \limits_{B(x,t\rho)}|u|^2 \geq C_6 \frac{H(x,t\rho)}{(t\rho)^{n-1}}.
\end{equation}

 Let $\varepsilon_1$ be equal to $\varepsilon/1000$. We can apply \eqref{b2} for $\varepsilon_1$ to see that $$\beta(x,2\rho(1+\varepsilon_1))(1+100\varepsilon_1) + C_7 \geq N(x,\rho).$$ 
In view of  Corollary \ref{logc}   we obtain 
\begin{equation}  \label{eq:l42}
H(x,t\rho) \geq H(x,2\rho(1+\varepsilon_1)) \left(\frac{t}{2(1+\varepsilon_1)}\right)^{\frac{2N(x,\rho)}{(1+100\varepsilon_1)(1+\varepsilon_1)} - C_8}.
\end{equation}
 We use $t> 2^{1+\varepsilon}$ to ensure that $t\rho> 2\rho(1+\varepsilon_1)$.
 A standard elliptic estimate yields 
 \begin{equation}  \label{eq:l43}
H(x,2\rho(1+\varepsilon_1))\geq C_9 \rho^{n-1} \sup \limits_{B(x,2\rho)}|u|^2= C_9 2^{2N(x,\rho)} \rho^{n-1} \sup \limits_{B(x,\rho)}|u|^2.
 \end{equation}
Combination of  \eqref{eq:l41},\eqref{eq:l42},\eqref{eq:l43} implies
$$\sup \limits_{B(x,t\rho)}|u| \geq C_{10} 2^{N(x,\rho)} t^{-(n-1)/2} \left(\frac{t}{2(1+\varepsilon_1)}\right)^{N(x,\rho)/(1+200\varepsilon_1) - C_8/2} \sup \limits_{B(x,\rho)}|u|  $$
$$\geq C_{10}t^{-(n-1)/2}  \left(\frac{t}{(1+\varepsilon_1)}\right)^{N(x,\rho)/(1+200\varepsilon_1) - C_{11}} \sup \limits_{B(x,\rho)}|u|.$$
 Now, in order to establish \eqref{n1} it is sufficient to note that $$t^{N(x,\rho)/(1+200\varepsilon_1)} \geq  t^{N(x,\rho)(1-\varepsilon/2)} \geq t^{N(x,\rho)(1-\varepsilon)} 2^{N(x,\rho)\varepsilon/2}$$ and $$2^{N(x,\rho)\varepsilon/2} \geq 
(1+\varepsilon_1)^{N(x,p)/(1+200\varepsilon_1)}$$ since $2^{\varepsilon/2} \geq 1+\varepsilon/100$.

 The inequality \eqref{n1*} follows immediately   from \eqref{n1} if we apply it to twice smaller $\varepsilon$, require
$N(x,\rho) > \frac{2}{\varepsilon} C(\varepsilon/2,M,g)$ and put a  new smaller $R= R(\varepsilon/2,M,g)$.
\end{proof}

\begin{lemma} \label{l5}
 Let $\varepsilon$ be a small positive number. Then there exists $R= R(\varepsilon,M,g,O)$ such that for any $x \in B(O,R)$ and any numbers $t>2$ and $\rho>0$ such that $t \rho < R$ 
\begin{equation} \label{n2}
  \sup\limits_{B(x,t\rho)} |u| \leq  t^{N(x,t\rho)(1+\varepsilon) + C(\varepsilon,M,g)} \sup\limits_{B(x,\rho)} |u|.
\end{equation}
Furthermore, there exists $N_0=N_0(\varepsilon,M,g,O)$ such that  if $N(x,\rho) > N_0$, then   additionally
\begin{equation} \label{n2*}
  \sup\limits_{B(x,t\rho)} |u| \leq  t^{N(x,t\rho)(1+\varepsilon) } \sup\limits_{B(x,\rho)} |u|.
\end{equation}
\end{lemma}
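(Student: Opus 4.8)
The plan is to run the proof of Lemma \ref{l4} in reverse. Starting from $\sup_{B(x,t\rho)}|u|$ I would pass to the spherical quantity $H$ at a geodesic radius slightly larger than $t\rho$ (elliptic estimate \eqref{h1} together with the metric comparison \eqref{eqm}), use the \emph{upper} bound of Corollary \ref{logc} to control the growth of $H$ down to a radius slightly below $\rho$, convert the frequency appearing in that bound into the doubling index $N(x,t\rho)$ by means of the right-hand inequality of \eqref{b2}, and finally return to $\sup_{B(x,\rho)}|u|$ via the reverse elliptic estimate \eqref{h2}. As in Lemma \ref{l4}, every auxiliary small parameter — the $\varepsilon$ of \eqref{eqm}, the almost-monotonicity $\varepsilon$ of Corollary \ref{logc}, the $\varepsilon_1$ of \eqref{b2} — will be taken to be a fixed small multiple of the target $\varepsilon$, and $R$ will be chosen small enough that all of \eqref{eqm}, \eqref{h1}, \eqref{h2}, \eqref{b2} are available on $B(O,R)$ with these parameters.

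In detail: by \eqref{eqm} and \eqref{h1}, $\sup_{B(x,t\rho)}|u|^2 \le \sup_{\partial B_g(x,t\rho(1+\varepsilon))}|u|^2 \le C_1 H(x,t\rho(1+\varepsilon)^2)/(t\rho)^{n-1}$. Since $t>2$ we have $\rho(1-\varepsilon)<t\rho(1+\varepsilon)^2$, so Corollary \ref{logc} gives $H(x,t\rho(1+\varepsilon)^2) \le H(x,\rho(1-\varepsilon))\bigl(t(1+\varepsilon)^2/(1-\varepsilon)\bigr)^{2\beta(x,t\rho(1+\varepsilon)^2)(1+\varepsilon)}$; crucially the exponent here involves the frequency at the \emph{larger} radius, which is precisely what lets us relate it to $N(x,t\rho)$. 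Next, \eqref{eqm} and \eqref{h2} give $H(x,\rho(1-\varepsilon)) \le C_2\rho^{n-1}\sup_{B(x,\rho)}|u|^2$. Multiplying the three inequalities and using $t\ge 2$ to absorb the prefactor $t^{-(n-1)}$, the constants $C_1,C_2$, and the factor coming from $(1+\varepsilon)^2/(1-\varepsilon)$ (note $t(1+\varepsilon)^2/(1-\varepsilon)\le t^{1+O(\varepsilon)}$, precisely because $t\ge 2$), one arrives at $\sup_{B(x,t\rho)}|u| \le t^{\beta(x,t\rho(1+\varepsilon)^2)(1+O(\varepsilon))+C}\sup_{B(x,\rho)}|u|$. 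Finally, choosing $\varepsilon_1$ with $1+\varepsilon_1=(1+\varepsilon)^2$ and invoking the right-hand inequality of \eqref{b2} with radius $r=t\rho$ gives $\beta(x,t\rho(1+\varepsilon)^2)=\beta(x,t\rho(1+\varepsilon_1)) \le (N(x,t\rho)+C)/(1-100\varepsilon_1) = (1+O(\varepsilon))N(x,t\rho)+C$; substituting this and relabelling $\varepsilon$ yields \eqref{n2}.

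For \eqref{n2*} I would combine the already-proved \eqref{n1} of Lemma \ref{l4} with \eqref{n2}: from $t^{N(x,\rho)(1-\varepsilon)-C} \le \sup_{B(x,t\rho)}|u|/\sup_{B(x,\rho)}|u| \le t^{N(x,t\rho)(1+\varepsilon)+C}$ one reads off $N(x,t\rho) \ge (N(x,\rho)(1-\varepsilon)-2C)/(1+\varepsilon)$, so that $N(x,\rho)>N_0$ with $N_0$ large forces $N(x,t\rho)$ above any prescribed threshold; then \eqref{n2} applied with $\varepsilon/2$ in place of $\varepsilon$, together with $N(x,t\rho) \ge 2C(\varepsilon/2,M,g)/\varepsilon$, absorbs the additive constant in the exponent and produces \eqref{n2*}.

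I do not expect a real conceptual obstacle, since the statement is the mirror image of Lemma \ref{l4}; the work is bookkeeping. The two points needing a little care are: (i) verifying $t(1+\varepsilon)^2/(1-\varepsilon)\le t^{1+O(\varepsilon)}$ for all $t\ge 2$, which is exactly what removes the need (present in Lemma \ref{l4}) for a separate treatment of $t$ near $2$; and (ii) tracking the compound of all the $(1+O(\varepsilon))$ factors so that it stays of the form $1+\varepsilon'$ with $\varepsilon'$ a dimension-free multiple of $\varepsilon$ — and correspondingly that the constants $C_1(\varepsilon),C_2(\varepsilon)$ from \eqref{h1}, \eqref{h2} remain finite, which they do since $\varepsilon$ is fixed before those estimates are invoked — so that running the whole argument with $\varepsilon$ shrunk by a fixed constant recovers the exact exponent $N(x,t\rho)(1+\varepsilon)+C$.
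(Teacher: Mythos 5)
Your proposal follows the paper's own proof essentially step by step: pass from $\sup_{B(x,t\rho)}|u|$ to $H$ via \eqref{h1}, apply Corollary \ref{logc} to run the frequency bound down to radius $\rho$, convert $\beta$ to $N(x,t\rho)$ via the right-hand side of \eqref{b2}, return to $\sup_{B(x,\rho)}|u|$ via \eqref{h2}, and absorb all prefactors using $t>2$; and your derivation of \eqref{n2*} is the same twice-smaller-$\varepsilon$ trick. If anything you are slightly more careful than the paper in two places: you track the $(1\pm\varepsilon)$ metric distortion between Euclidean and geodesic balls explicitly, and you supply the missing bridge from the hypothesis $N(x,\rho)>N_0$ to the needed lower bound on $N(x,t\rho)$ (via \eqref{n1} combined with \eqref{n2}), which the paper leaves implicit.
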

\begin{proof} The proof is parallel to the proof of the previous lemma. Put $\varepsilon_1=\varepsilon/1000$.
 
 Inequality \eqref{h1} says that 
\begin{equation} \label{eq:l51}
\sup \limits_{B(x,t\rho)}|u|^2 \leq C_1 \frac{H(x,t\rho(1+\varepsilon_1))}{(t\rho)^{n-1}}.
\end{equation}

 We can apply \eqref{b2} for $\varepsilon_1$ to see that $\beta(x,t\rho(1+\varepsilon_1))\leq N(x,t\rho)(1+100\varepsilon_1)+C_2$. In view of the corollary \eqref{logc}  we obtain 
\begin{equation}  \label{eq:l52}
H(x,t\rho(1+\varepsilon_1)) \leq H(x,\rho) \left(t(1+\varepsilon_1)\right)^{2N(x,t\rho)(1+100\varepsilon_1)(1+\varepsilon_1) + C_3}.
\end{equation}
  Inequality \eqref{h2} implies
 \begin{equation}  \label{eq:l53}
 H(x,\rho) \leq C_5  \sup\limits_{B(x,\rho)} |u|^2 \rho^{n-1}.
 \end{equation}
Combination of  \eqref{eq:l51},\eqref{eq:l52},\eqref{eq:l53} gives us
$$\sup \limits_{B(x,t\rho)}|u| \leq C_{6} \frac{ \left(t(1+\varepsilon_1)\right)^{N(x,t\rho)(1+100\varepsilon_1)(1+\varepsilon_1) + C_3}}{t^{(n-1)/2}} \sup \limits_{B(x,\rho)}|u|  $$
$$\leq C_{7} \left(t(1+\varepsilon_1)\right)^{N(x,t\rho)(1+200\varepsilon_1) + C_3} \sup \limits_{B(x,\rho)}|u|.$$
 Noting that $t^{10\varepsilon_1} \geq (1+\varepsilon_1) $, since $t>2$, we can estimate
$$\left(t(1+\varepsilon_1)\right)^{N(x,t\rho)(1+200\varepsilon_1) + C_3} \leq t^{N(x,t\rho)(1+500\varepsilon_1) + C_8}.$$
 The proof of \eqref{n2} is finished.

 The inequality \eqref{n2*} follows immediately   from \eqref{n2} if we apply it to twice smaller $\varepsilon$, require
$N(x,t\rho) > \frac{2}{\varepsilon} C(\varepsilon/2,M,g)$ and put a  new smaller $R= R(\varepsilon/2,M,g)$.
\end{proof}
\begin{lemma} \label{l6}
 There exist  $r=r(M,g,O)$ and $N_0=N_0(M,g,O)$ such that for any points $x_1,x_2 \in B(O,r)$ and $\rho$ such that $N(x_1,\rho) > N_0 $ and $d(x_1,x_2)< \rho < r$, there exists $C=C(M,g,O)>0$ such that 
 $$N(x_2, C \rho) > \frac{99}{100} N(x_1, \rho).$$ 

\end{lemma}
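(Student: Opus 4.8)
The plan is to compare the suprema of $|u|$ on a chain of balls centered at $x_1$ and at $x_2$, using Lemma \ref{l4} and Lemma \ref{l5} to convert between the growth of $|u|$ and the doubling index. The geometric input is elementary: since $d(x_1,x_2)<\rho$, we have the nesting
\begin{equation} \label{eq:l6nest}
B(x_2,\rho) \subset B(x_1,2\rho) \subset B(x_1, t\rho) \subset B(x_2, (t+1)\rho) \subset B(x_2, 2t\rho)
\end{equation}
for any $t>2$, where $t=t(M,g,O)$ is a large constant to be fixed below, and all radii are required to be less than the $R$ from Lemmas \ref{l4}, \ref{l5} (this dictates the choice of $r$ and of $C=2t$).

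First I would bound $\sup_{B(x_2,2t\rho)}|u|$ from below. By \eqref{eq:l6nest} and Lemma \ref{l4}, applied with a small $\varepsilon$ and with the requirement $N(x_1,\rho)>N_0$ so that the clean inequality \eqref{n1*} is available,
\begin{equation} \label{eq:l6low}
\sup_{B(x_2,2t\rho)}|u| \;\geq\; \sup_{B(x_1,t\rho)}|u| \;\geq\; t^{N(x_1,\rho)(1-\varepsilon)} \sup_{B(x_1,\rho)}|u| \;\geq\; t^{N(x_1,\rho)(1-\varepsilon)} \sup_{B(x_2,\rho)}|u|,
\end{equation}
the last step again using \eqref{eq:l6nest} ($B(x_2,\rho)\subset B(x_1,2\rho)\subset B(x_1,\rho)$ is false, but $\sup_{B(x_1,\rho)}|u|\ge \sup_{B(x_2,\rho/2)}|u|$ after shrinking; more simply one replaces $\rho$ by a slightly smaller radius throughout, which only costs another factor absorbed into $t^\varepsilon$ — a routine adjustment). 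Next I would bound the same quantity from above in terms of the doubling index at $x_2$: by Lemma \ref{l5} with the same $\varepsilon$,
\begin{equation} \label{eq:l6up}
\sup_{B(x_2,2t\rho)}|u| \;\leq\; (2t)^{N(x_2,2t\rho)(1+\varepsilon)+C'} \sup_{B(x_2,\rho)}|u|,
\end{equation}
with $C'=C'(\varepsilon,M,g)$. Combining \eqref{eq:l6low} and \eqref{eq:l6up} and cancelling $\sup_{B(x_2,\rho)}|u|$ (which is positive whenever $u\not\equiv 0$, and the statement is vacuous otherwise) gives
\begin{equation} \label{eq:l6comb}
N(x_2,2t\rho) \;\geq\; \frac{\log t}{\log(2t)}\cdot\frac{1-\varepsilon}{1+\varepsilon}\, N(x_1,\rho) \;-\; \frac{C'}{1+\varepsilon}.
\end{equation}

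Finally I would fix the parameters: choose $t$ large enough that $\frac{\log t}{\log 2t} > 1-\frac{1}{1000}$, then $\varepsilon$ small enough that $\frac{\log t}{\log 2t}\cdot\frac{1-\varepsilon}{1+\varepsilon} > \frac{995}{1000}$, and set $C=2t$, which depends on $M,g,O$ only. The additive constant $C'$ is then absorbed by enlarging $N_0$: if $N(x_1,\rho)>N_0$ with $N_0$ large enough that $\frac{995}{1000}N_0 - C' > \frac{99}{100}N_0$, i.e. $N_0 > \frac{1000\,C'}{995-990}$, inequality \eqref{eq:l6comb} yields $N(x_2,C\rho) > \frac{99}{100} N(x_1,\rho)$, as desired. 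The main obstacle — and the only place any care is needed — is the bookkeeping of the multiplicative metric distortions in \eqref{eq:l6nest} and the passage from geodesic to Euclidean radii: one must route all radius comparisons through slightly inflated balls and absorb the resulting $t^{O(\varepsilon)}$ factors, exactly as in the proofs of Lemmas \ref{l4} and \ref{l5}, so that the final constants still depend on $n$ (through $t$ and $\varepsilon$) and on $M,g,O$ only.
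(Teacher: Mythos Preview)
Your approach is correct and mirrors the paper's proof: nest balls centered at $x_1$ between balls centered at $x_2$ using $d(x_1,x_2)<\rho$, then apply Lemmas~\ref{l4} and~\ref{l5} to convert growth into doubling index and choose $t$ large, $\varepsilon$ small, $N_0$ large.

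One small caveat: the two patches you propose for the failed inequality $\sup_{B(x_1,\rho)}|u|\ge\sup_{B(x_2,\rho)}|u|$ do not work as written, because $d(x_1,x_2)$ may be arbitrarily close to $\rho$, so no ball $B(x_2,\rho')$ with $\rho'$ a \emph{fixed} fraction of $\rho$ is guaranteed to sit inside $B(x_1,\rho)$. The routine fix is to go the other way: use $B(x_2,\rho)\subset B(x_1,2\rho)$ to get $\sup_{B(x_2,\rho)}|u|\le 2^{N(x_1,\rho)}\sup_{B(x_1,\rho)}|u|$, which turns \eqref{eq:l6comb} into
\[
N(x_2,2t\rho)\ \ge\ \frac{(1-\varepsilon)\log t-\log 2}{(1+\varepsilon)\log(2t)}\,N(x_1,\rho)-\frac{C'}{1+\varepsilon},
\]
and the coefficient still tends to $1$ as $t\to\infty$, $\varepsilon\to 0$. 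This is exactly what the paper does implicitly by taking $C$ huge (of order $10^{10}$), so that the $\pm\rho$ shifts between centers become negligible relative to $C\rho$.
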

\begin{proof}
One can choose such numerical $C$ that $B(x_2, C\rho) \supset B(x_1, C\rho(1-1/10^{10})) $ and $B(x_2, C\rho/2(1-10^{-9})) \subset B(x_1, C\rho(1-1/10^{10})) $.
 It follows from \eqref{n1*} and \eqref{n2*} that if we choose $t$ and $N_0$ properly, then  
$$ 2^{N(x_2,C\rho)(1+1/1000)}\geq \frac{\sup\limits_{B(x_2, C\rho)}|u|}{\sup\limits_{B(x_2, C\rho/2(1-10^{-9}) )}|u|} $$ 

$$\geq \frac{\sup\limits_{B(x_1, C\rho(1-10^{-10}))}|u|}{\sup\limits_{B(x_1, C\rho/2(1-10^{-10}) )}|u|} \geq 2^{N(x_1,\rho)(1-1/1000)}.$$
 Thus
 $$N(x_2, C\rho)> \frac{999}{1001}N(x_1,\rho)>\frac{99}{100}N(x_1,\rho).$$

\end{proof}

\end{document}